\documentclass[11pt]{article}

\usepackage[a4paper,margin=25mm]{geometry}
\usepackage{authblk}
\usepackage[T1]{fontenc}
\usepackage[utf8]{inputenc}
\usepackage{lmodern}
\usepackage{amsmath,amssymb,amsthm,mathtools}
\usepackage{array,longtable,booktabs}
\newcolumntype{P}[1]{>{\raggedright\arraybackslash}p{#1}}
\usepackage{microtype}
\usepackage[dvipsnames]{xcolor}
\usepackage[colorlinks=true,linkcolor=MidnightBlue,citecolor=MidnightBlue,urlcolor=MidnightBlue]{hyperref}

\newtheorem{theorem}{Theorem}[section]

\newtheorem{lemma}[theorem]{Lemma}

\theoremstyle{remark}
\newtheorem{remark}[theorem]{Remark}

\newcommand{\F}{\mathbb F}
\newcommand{\Leib}{\operatorname{Leib}}
\newcommand{\Der}{\operatorname{Der}}

\newcommand{\Fx}{\F^{\times}}

\title{Derivation Algebras of Three-Dimensional Leibniz Algebras:\\A Complete Description}

\author[1]{Leonid A. Kurdachenko}
\author[1]{Oleksandr O. Pypka}
\author[2]{Mykola M. Semko}

\affil[1]{Oles Honchar Dnipro National University, Dnipro, Ukraine}
\affil[2]{State Tax University, Irpin, Ukraine}

\date{}

\begin{document}

\maketitle

\vspace{-3.5em}

\begin{center}
\small
E-mail addresses:\\[2pt]
\href{mailto:lkurdachenko@gmail.com}{lkurdachenko@gmail.com}
(L.A. Kurdachenko)\\
\href{mailto:sasha.pypka@gmail.com}{sasha.pypka@gmail.com}
(O.O. Pypka)\\
\href{mailto:dr.mykola.semko@gmail.com}{dr.mykola.semko@gmail.com}
(M.M. Semko)
\end{center}

\begin{abstract}
Derivation algebras are natural structural invariants of Leibniz algebras.  In a number of earlier papers, derivations were described for several classes of one-generated, nilpotent, non-nilpotent, and low-dimensional Leibniz algebras.  In the present paper we complete the description for three-dimensional non-Lie left Leibniz algebras over arbitrary fields.  We use a refined organization of the three-dimensional classification into sixteen isomorphism types, including parameterized families.  Previously known cases are incorporated by reference, with changes of basis recorded when necessary.  For the remaining cases we determine the derivations by direct coefficient calculations.  Special attention is paid to characteristic two, where several derivation algebras increase in dimension.  We also record simple decompositions of the resulting Lie algebras into natural ideals and subalgebras.  A final table gives the derivation algebra of every type in the classification.
\end{abstract}

\medskip
\noindent\textbf{Keywords.} Leibniz algebra; derivation; derivation algebra; low-dimensional algebra; arbitrary field.

\medskip
\noindent\textbf{2020 Mathematics Subject Classification.} 17A32, 17A60.

\section{Introduction}

Let $L$ be an algebra over a field $\F$.  A linear map $d:L\to L$ is called a derivation if
\[
 d([x,y])=[d(x),y]+[x,d(y)]
 \qquad (x,y\in L).
\]
The set $\Der(L)$ of all derivations is a Lie algebra under the usual commutator
\[
 [d_1,d_2]=d_1d_2-d_2d_1.
\]
Thus an explicit description of $\Der(L)$ gives a natural invariant of the multiplication of $L$.

Derivations of Leibniz algebras have been studied in many different situations. In particular, derivations play a substantial role in the structural theory developed for Leibniz algebras; see, for example,~\cite{Monograph} and the references therein. Finite-dimensional one-generated Leibniz algebras were considered in~\cite{DerCyclic1,DerCyclic2}.  Several nilpotent and low-dimensional cases were treated in~\cite{DerNilpotent,DerLowDimsReports,DerLowDims,DerLowDimsUMJ}, while non-nilpotent cases were studied in~\cite{DerNonNilpotent1,DerSome,DerNonNilpotent3,DerNonNilpotent4}.  Endomorphisms and derivations of certain infinite-dimensional one-generated Leibniz algebras were considered in~\cite{DerInfinite}.  Mancini~\cite{ManciniBider} computed biderivations of right Leibniz algebras of dimension at most three over fields of characteristic different from two.  Since the first component of a biderivation is a derivation, those calculations also provide a useful independent comparison in characteristic different from two.

A detailed classification of three-dimensional non-Lie Leibniz algebras over arbitrary fields was considered in~\cite{Old2022,Rakhimov2018}.  A subsequent reconsideration of the case division, together with the identification of isomorphic presentations and a refinement of several parameter and characteristic restrictions, allows the resulting list to be organized more economically into sixteen types.  We use this refined organization throughout the present paper.

The available derivation results do not immediately give a complete arbitrary-field table for these sixteen representatives.  Some papers use different multiplication tables, some results are stated only under a characteristic restriction, and characteristic two produces additional derivations in several cases.  Moreover, for a few representatives the displayed formula in an earlier source does not give the full derivation space in the present normalization.  For this reason we compare the earlier results with the fixed sixteen-type list and give direct calculations whenever this is needed.

The main result is the following.

\begin{theorem}[Main theorem]\label{thm:main}
Let $L$ be a three-dimensional non-Lie left Leibniz algebra over an arbitrary field $\F$.  Then $\Der(L)$ is explicitly determined by the formulas proved or cited in the present paper and summarized in Table~\ref{tab:summary}.  In particular, all parameter families and all characteristic-two cases in the refined sixteen-type classification are covered.
\end{theorem}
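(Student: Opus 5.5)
The plan is a case-by-case verification over the refined sixteen-type classification. Any three-dimensional non-Lie left Leibniz algebra over $\F$ is isomorphic to one of the sixteen representatives, possibly with parameters and characteristic restrictions, and $\Der$ is an isomorphism invariant: if $\varphi:L\to L'$ is an isomorphism, then $d\mapsto \varphi d\varphi^{-1}$ is a Lie algebra isomorphism $\Der(L)\to\Der(L')$. So it suffices to compute $\Der(L)$ for each representative in its fixed basis $e_1,e_2,e_3$. For each type I will write a general linear map $d(e_j)=\sum_i \alpha_{ij}e_i$, nine unknown coefficients. Since the derivation identity is bilinear, it is enough to impose it on the nine basis pairs $(e_i,e_j)$. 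Each pair gives three linear equations in the $\alpha_{ij}$, with coefficients drawn from the structure constants. Solving this homogeneous system gives $\Der(L)$ as a subspace of $\mathfrak{gl}_3(\F)$, and I will record it as an explicit matrix form with free parameters.

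Before computing, I will sort the sixteen types into two groups. The first group is the types already treated in the cited literature (\cite{DerCyclic1,DerCyclic2,DerNilpotent,DerLowDims,DerNonNilpotent1}, etc.). For these I will find the change of basis from the source's multiplication table to our representative, transport the source's derivation formula by conjugation as above, and check two things. First, the source's characteristic hypothesis must cover all of $\F$; second, the transported formula must have the same dimension as the solution space of our linear system. Wherever either check fails, including the few cases where an earlier displayed formula is incomplete, I will move the type to the second group. For the second group I will do the direct coefficient calculation. Throughout, I will use the Leibniz kernel $\mathrm{Leib}(L)$, spanned by the squares $[x,x]$, and the left and right annihilators as a shortcut. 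These are characteristic ideals, so every derivation leaves them invariant, which forces many $\alpha_{ij}$ to vanish before the linear system is written out.

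The main obstacle is the bookkeeping of parameters and characteristic. The linear systems contain coefficients such as $2$, $\lambda-1$, $\lambda^2$ or $1+\lambda$. Whenever a step divides by such a quantity, I will split the case: characteristic two separately, and the special parameter values separately. In each degenerate case I will check directly which extra derivations survive. This is where the dimension jumps in characteristic two come from. For example, an equation $2\alpha_{ij}=0$ kills $\alpha_{ij}$ only when the characteristic is not two. To make sure no subcase is missed, I will list, for each parametrized family, every scalar that is ever inverted, and treat its vanishing locus explicitly.

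Finally, for each type I will verify that the stated space is closed under the commutator. This is automatic, but it is a useful consistency check. I will also confirm the dimension by computing the rank of the coefficient matrix of the linear system. Collecting all cases gives Table~\ref{tab:summary}. Since every isomorphism type, parameter value and characteristic has then been covered, this proves Theorem~\ref{thm:main}.
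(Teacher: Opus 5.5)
Your plan is essentially the paper's own argument: reduce to the sixteen representatives (with $\Der$ transported by conjugation), import the known cases after the changes of basis in Remark~\ref{rem:basis}, and compute the rest directly from the basis pairs. That direct computation uses the derivation-invariance of $\Leib(L)$, $[L,L]$ and the centers (Lemma~\ref{lem:invariant}) to cut down unknowns, and splits off characteristic two and special parameter values such as $\rho=1$ or $\gamma=0$. Re-solving the system for the cited types only makes the argument more self-contained; two small points are that the right center of a left Leibniz algebra need not be an ideal (only its invariance under derivations is used, and that does hold), and that the internal decompositions in Table~\ref{tab:summary} come from actually computing brackets of the basis derivations, not just from checking closure.
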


\section{Preliminaries and the sixteen types}

Throughout the paper, $\F$ is an arbitrary field unless a characteristic restriction is stated explicitly.  All Leibniz algebras are \emph{left} Leibniz algebras, so
\begin{equation}\label{eq:Leibniz}
 [x,[y,z]]=[[x,y],z]+[y,[x,z]]
\end{equation}
for all $x,y,z\in L$.  The Leibniz kernel is
\[
 \Leib(L)=\langle [x,x]\mid x\in L\rangle_{\F}.
\]
All products not displayed in a multiplication table are zero.

We fix a basis $a_1,a_2,a_3$ of $L$.  Matrices act on column vectors: the $j$-th column of the matrix of $d$ consists of the coordinates of $d(a_j)$.  When one algebra $L$ is fixed, we put $D=\Der(L)$ and denote by $\Xi:D\to M_3(\F)$ the canonical monomorphism which assigns to every derivation its matrix in the basis $a_1,a_2,a_3$.  As in our earlier papers, we identify $D$ with $\Xi(D)$ when writing matrix formulas.

Whenever the internal structure of $D$ is described, the basis derivations are defined explicitly from the parameters in the displayed matrix.  We use $\rtimes$ only in the elementary sense of a semidirect sum: if $D=I\rtimes H$, then $I$ is an ideal, $H$ is a subalgebra, $D=I+H$, and $I\cap H=0$.

We use the following elementary observation repeatedly.

\begin{lemma}\label{lem:invariant}
Let $d\in\Der(L)$.  Then
\[
 d(\Leib(L))\subseteq\Leib(L),\qquad d([L,L])\subseteq[L,L].
\]
The left center, the right center, and the center of $L$ are also invariant under $d$.
\end{lemma}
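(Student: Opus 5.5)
The plan is to verify each invariance claim directly from the derivation identity $d([x,y])=[d(x),y]+[x,d(y)]$, linearity of $d$, and the definitions of the subspaces involved.

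\emph{The derived algebra.} Since $[L,L]$ is spanned by the products $[x,y]$, it suffices to treat one such product. The derivation identity writes $d([x,y])$ as a sum of two products, so $d([x,y])\in[L,L]$. Linearity then gives $d([L,L])\subseteq[L,L]$.

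\emph{The Leibniz kernel.} It suffices to show $d([x,x])\in\Leib(L)$ for every $x$. The identity gives $d([x,x])=[d(x),x]+[x,d(x)]$, so the task is to show that $[u,v]+[v,u]\in\Leib(L)$ for all $u,v\in L$. This is the polarization identity
\[
[u,v]+[v,u]=[u+v,u+v]-[u,u]-[v,v],
\]
which holds in any bilinear algebra, and each term on the right lies in $\Leib(L)$. Taking $u=d(x)$ and $v=x$ gives the claim, and linearity finishes this part. This step is the only point that needs a small trick beyond unwinding definitions.

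\emph{The centers.} Let $\zeta^{l}(L)=\{z\mid [z,x]=0\ \forall x\}$ be the left center, $\zeta^{r}(L)=\{z\mid [x,z]=0\ \forall x\}$ the right center, and $\zeta(L)=\zeta^{l}(L)\cap\zeta^{r}(L)$ the center. Take $z\in\zeta^{l}(L)$ and any $x\in L$. Then
\[
0=d([z,x])=[d(z),x]+[z,d(x)]=[d(z),x],
\]
because $[z,d(x)]=0$ by the choice of $z$. Hence $[d(z),x]=0$ for all $x$, so $d(z)\in\zeta^{l}(L)$. The right center is handled symmetrically, starting from $0=d([x,z])$. The center is the intersection of two $d$-invariant subspaces and is therefore $d$-invariant as well.

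None of these steps uses the Leibniz identity~\eqref{eq:Leibniz}, so the lemma holds for an arbitrary algebra. The characteristic-free polarization argument for the Leibniz kernel is the only mildly nonroutine point.
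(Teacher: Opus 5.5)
Your proposal is correct and follows the paper's proof essentially verbatim: the same one-line argument for $[L,L]$, the same polarization $[d(x),x]+[x,d(x)]=[d(x)+x,d(x)+x]-[d(x),d(x)]-[x,x]$ for the Leibniz kernel, and the same computation $0=d([z,x])=[d(z),x]$ for the one-sided centers, with the center handled as their intersection. Your side remark that the Leibniz identity is never used is accurate, so the lemma indeed holds for arbitrary algebras.
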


\begin{proof}
First,
\[
 d([x,y])=[d(x),y]+[x,d(y)]\in[L,L],
\]
so $[L,L]$ is invariant.  For the Leibniz kernel, note that
\[
 d([x,x]) = [d(x),x]+[x,d(x)] = [d(x)+x,d(x)+x]-[d(x),d(x)]-[x,x].
\]
Each term in the last expression belongs to $\Leib(L)$, and hence
$d(\Leib(L))\subseteq\Leib(L)$.

If $z$ belongs to the left center, then for every $x\in L$,
\[
 0=d([z,x])=[d(z),x]+[z,d(x)]=[d(z),x],
\]
so $d(z)$ again belongs to the left center.  The right center is treated in the same way by applying $d$ to $[x,z]=0$.  Their intersection, the center of $L$, is therefore also invariant.
\end{proof}

The sixteen types used below are listed in Table~\ref{tab:status}.  The last two columns record the literature situation and the role of the present paper.  The purpose of the table is not to reproduce every result from the cited papers, but to show which cases can be used directly and which cases require a new calculation in the present basis.

\begingroup
\small
\setlength{\tabcolsep}{3pt}
\renewcommand{\arraystretch}{1.18}
\begin{longtable}{@{}P{0.09\textwidth}P{0.415\textwidth}P{0.245\textwidth}P{0.19\textwidth}@{}}
\caption{The refined sixteen-type list and the status of the derivation problem.}\label{tab:status}\\
\toprule
Type & Nonzero products and restrictions & Earlier derivation results & Status here\\
\midrule
\endfirsthead
\toprule
Type & Nonzero products and restrictions & Earlier derivation results & Status here\\
\midrule
\endhead
\midrule
\multicolumn{4}{r}{\emph{Continued on next page}}\\
\endfoot
\bottomrule
\endlastfoot
$L_1$ & $[a_1,a_1]=a_3$ & \cite{DerLowDimsUMJ,DerLowDimsReports} & previously known\\
$L_2$ & $[a_1,a_1]=[a_1,a_2]=a_3$ & \cite{DerNilpotent} & previously known\\
$L_3(\alpha)$ & $[a_1,a_1]=[a_2,a_1]=a_3$, $[a_1,a_2]=\alpha a_3$; $\alpha\ne0,-1$, $\alpha\sim\alpha^{-1}$ & no complete arbitrary-field description located & arbitrary fields completed here\\
$L_4$ & $[a_1,a_1]=[a_2,a_1]=a_3$, $[a_1,a_2]=-a_3$ & no complete arbitrary-field description located & characteristic $2$ and unified formula here\\
$L_5(\beta)$ & $[a_1,a_1]=a_3$, $[a_2,a_2]=\beta a_3$; $X^2+\beta$ has no root; $\beta'/\beta\in(\F^\times)^2$ gives the same isomorphism type & related calculation in \cite{DerLowDims} & full arbitrary-field calculation here\\
$L_6(\delta)$ & $[a_1,a_1]=[a_1,a_2]=a_3$, $[a_2,a_2]=\delta a_3$; $X^2+X+\delta$ has no root; in this normalization $\delta$ is an exact parameter & related calculation in \cite{DerLowDims} & full arbitrary-field calculation here\\
$L_7$ & $[a_1,a_1]=a_3$, $[a_1,a_2]=a_2$, $[a_2,a_1]=-a_2$ & \cite{DerNonNilpotent3} & previously known\\
$L_8(\lambda)$ & $\operatorname{char}\F=2$; $[a_1,a_1]=\lambda a_3$, $[a_1,a_2]=[a_2,a_1]=a_2$, $[a_2,a_2]=a_3$; $\lambda'=(\lambda+t^2)/s^2$, $t\in\F$, $s\in\F^\times$ & \cite{DerNonNilpotent3} & previously known\\
$L_9$ & $[a_1,a_1]=[a_1,a_3]=a_3$ & no complete arbitrary-field description located; related calculation in \cite{DerNonNilpotent4} & complete formula given here\\
$L_{10}(r)$ & $r\in\Fx$; $[a_1,a_1]=a_3$, $[a_1,a_2]=a_2$, $[a_2,a_1]=-a_2$, $[a_1,a_3]=ra_3$; different values of $r$ give different normalized types & no complete arbitrary-field description located for corresponding presentations & all $r$ and all characteristics here\\
$L_{11}$ & $\operatorname{char}\F\ne2$; $[a_2,a_2]=a_3$, $[a_1,a_2]=a_2$, $[a_2,a_1]=-a_2$, $[a_1,a_3]=2a_3$ & no direct arbitrary-field formula used here & direct formula in present basis\\
$L_{12}$ & $[a_1,a_1]=a_2$, $[a_1,a_2]=a_3$ & \cite{DerCyclic1,DerCyclic2,DerLowDimsUMJ} & previously known\\
$L_{13}$ & $[a_1,a_1]=a_2$, $[a_1,a_2]=a_2+a_3$ & \cite{DerCyclic1,DerNonNilpotent1,DerSome} & previously known\\
$L_{14}(\rho)$ & $\rho\in\Fx$; $[a_1,a_1]=[a_1,a_2]=a_2$, $[a_1,a_3]=\rho a_3$; $\rho\sim\rho^{-1}$ & $\rho=1$ in \cite{DerNonNilpotent1}; no complete arbitrary-field description located & general parameter completed here\\
$L_{15}$ & $[a_1,a_1]=a_2$, $[a_1,a_2]=a_2+a_3$, $[a_1,a_3]=a_3$ & \cite{DerCyclic1,DerNonNilpotent1} & previously known, including characteristic $2$\\
$L_{16}(\beta,\gamma)$ & $[a_1,a_1]=a_2$, $[a_1,a_2]=a_3$, $[a_1,a_3]=\beta a_2+\gamma a_3$; $X^2-\gamma X-\beta$ irreducible; $(\beta,\gamma)\sim$ $(c^2\beta,c\gamma)$, $c\in\F^\times$ & related calculations in \cite{DerCyclic1,DerSome} & explicit arbitrary-field formula here\\
\end{longtable}
\endgroup

\begin{remark}[Changes of basis in known cases]\label{rem:basis}
Some earlier papers use representatives different from the ones fixed above.  We record the changes that will be used later.

For $L_2$, the presentation
\[
 [e_1,e_1]=[e_2,e_1]=e_3
\]
is changed to our table by
\[
 a_1=e_1,\qquad a_2=e_1-e_2,\qquad a_3=e_3.
\]

A representative used for $L_7$ is
\[
 [e_1,e_1]=e_3,\qquad [e_1,e_2]=-e_2,\qquad [e_2,e_1]=e_2.
\]
The change $a_1=-e_1$, $a_2=e_2$, $a_3=e_3$ gives the table used here.

A common presentation leading to $L_{13}$ is
\[
 [e_1,e_1]=e_3,\qquad [e_1,e_2]=e_2+\lambda e_3.
\]
For any $\lambda\in\F$, the basis
\[
 a_1=e_1+e_2,\qquad
 a_2=e_2+(\lambda+1)e_3,\qquad
 a_3=-e_3
\]
gives $[a_1,a_1]=a_2$ and $[a_1,a_2]=a_2+a_3$.

For $L_{14}(1)$, the presentation
\[
 [e_1,e_1]=[e_1,e_3]=e_3,\qquad [e_1,e_2]=e_2
\]
is changed to our table by $a_1=e_1$, $a_2=e_3$, $a_3=e_2$.

Finally, for
\[
 [e_1,e_1]=[e_1,e_3]=e_3,\qquad [e_1,e_2]=e_2+\lambda e_3,
 \qquad \lambda\ne0,
\]
the basis
\[
 a_1=e_1+e_2-(1+\lambda)e_3,\qquad
 a_2=e_2,\qquad
 a_3=\lambda e_3
\]
gives $L_{15}$.
\end{remark}

\section{Nilpotent algebras with one-dimensional Leibniz kernel}

The derivation algebras of $L_1$ and $L_2$ are already available in the literature cited in Table~\ref{tab:status}.  In our basis they are
\[
 \Der(L_1)=
 \left\{
 \begin{pmatrix}
 a&0&0\\ b&d&0\\ c&e&2a
 \end{pmatrix}
 :a,b,c,d,e\in\F
 \right\},
\]
and
\[
 \Der(L_2)=
 \left\{
 \begin{pmatrix}
 a&0&0\\ b&a+b&0\\ c&d&2a+b
 \end{pmatrix}
 :a,b,c,d\in\F
 \right\}.
\]
Thus $\dim\Der(L_1)=5$ and $\dim\Der(L_2)=4$.

For $L_1$, put $D=\Der(L_1)$, and let $u,v,w,z,t\in D$ be the derivations obtained from the displayed matrix by setting, respectively,
\[
 a=1,\qquad b=1,\qquad c=1,\qquad d=1,\qquad e=1,
\]
with all remaining parameters equal to zero.  Then $u,v,w,z,t$ form a basis of $D$.  The subspace
\[
 \F v\oplus\F w\oplus\F t
\]
is an ideal of $D$, $\F u\oplus\F z$ is an abelian subalgebra, and
\[
 D=(\F v\oplus\F w\oplus\F t)\rtimes(\F u\oplus\F z).
\]
Inside the first ideal the only nonzero bracket, up to skew-symmetry, is
\[
 [v,t]=-w.
\]
The action of $\F u\oplus\F z$ is given by
\[
 [u,v]=-v,\quad [u,w]=w,\quad [u,t]=2t,
 \qquad [z,v]=v,\quad [z,t]=-t,
\]
and $[z,w]=0$.

For $L_2$, put $D=\Der(L_2)$, and let $u,v,w,z\in D$ correspond, respectively, to the parameters $a,b,c,d$ in the displayed matrix.  Thus $u,v,w,z$ form a basis of $D$.  The subspace $\F w\oplus\F z$ is an abelian ideal, $\F u\oplus\F v$ is an abelian subalgebra, and
\[
 D=(\F w\oplus\F z)\rtimes(\F u\oplus\F v).
\]
The nonzero action brackets are
\[
 [u,w]=w,\qquad [u,z]=z,\qquad [v,w]=w,\qquad [v,z]=-w.
\]

We now turn to the remaining nilpotent types.

\begin{theorem}\label{thm:L3}
Let $L=L_3(\alpha)$, where $\alpha\ne0,-1$.  Then
\[
\Der(L)=
\left\{
\begin{pmatrix}
 a&0&0\\
 b&a+(1+\alpha)b&0\\
 c&d&2a+(1+\alpha)b
\end{pmatrix}
:a,b,c,d\in\F
\right\}.
\]
In particular, $\dim_{\F}\Der(L)=4$.  Put $D=\Der(L)$, and let $u,v,w,z\in D$ be the derivations corresponding, respectively, to the parameters $a,b,c,d$ in the displayed matrix.  Then $u,v,w,z$ form a basis of $D$.  The subspace $\F w\oplus\F z$ is an abelian ideal, $\F u\oplus\F v$ is an abelian subalgebra, and
\[
 D=(\F w\oplus\F z)\rtimes(\F u\oplus\F v).
\]
The action is
\[
 [u,w]=w,\qquad [u,z]=z,\qquad
 [v,w]=(1+\alpha)w,\qquad [v,z]=-w.
\]
\end{theorem}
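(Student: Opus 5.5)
We first use Lemma~\ref{lem:invariant} to fix the shape of the matrix. In $L=L_3(\alpha)$ the only nonzero products are $[a_1,a_1]=a_3$, $[a_1,a_2]=\alpha a_3$ and $[a_2,a_1]=a_3$. Hence $[L,L]=\F a_3$. Since $\alpha\ne0$, we also have $[a_1,a_1]=a_3\ne0$, so $\Leib(L)=\F a_3$. The lemma then gives $d(a_3)=\gamma a_3$ for every $d\in\Der(L)$. Next, $a_3$ lies in both the left and the right center, and so does every product. We therefore write
\[
 d(a_1)=x_1a_1+x_2a_2+x_3a_3,\qquad d(a_2)=y_1a_1+y_2a_2+y_3a_3,
\]
and impose the derivation identity on the nine pairs $(a_i,a_j)$.

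Pairs involving $a_3$ give nothing beyond the requirement that $d(a_3)\in\F a_3$. Indeed, both sides vanish once $d(a_3)$ is a multiple of $a_3$, because $a_3$ annihilates $L$ on both sides and $L\cdot L\subseteq\F a_3$. The four remaining pairs give the following linear conditions.
\begin{itemize}
\item From $(a_1,a_1)$: $\gamma=2x_1+x_2(\alpha+1)$, using $[d a_1,a_1]+[a_1,d a_1]$.
\item From $(a_1,a_2)$: $\alpha\gamma=x_1\alpha+y_1+y_2\alpha$, using $[d a_1,a_2]+[a_1,d a_2]$.
\item From $(a_2,a_1)$: $\gamma=y_1+y_2+x_1$.
\item From $(a_2,a_2)$: $0=y_1(\alpha+1)$.
\end{itemize}
Since $\alpha\ne-1$, the last condition forces $y_1=0$. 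Subtracting the second condition from $\alpha$ times the third gives $0=y_1(\alpha-1)$, which is consistent with $y_1=0$. The system is then equivalent to $y_1=0$, $\gamma=x_1+y_2$ and $2x_1+(1+\alpha)x_2=x_1+y_2$. The last equation reads $y_2=x_1+(1+\alpha)x_2$.

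We now set $a=x_1$, $b=x_2$, $c=x_3$ and $d=y_3$. This yields exactly the displayed matrix, with free parameters $a,b,c,d$ and with the first row zero outside the $(1,1)$ entry. Conversely, every such matrix satisfies the four conditions. This proves the description of $\Der(L)$ and $\dim\Der(L)=4$. I expect no real obstacle here. The only care needed is with the hypotheses: $\alpha\ne-1$ is exactly what kills $y_1$, and $\alpha\ne0$ guarantees $\Leib(L)=\F a_3$. Notably, no characteristic assumption enters, so the formula holds in characteristic $2$ as well.

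For the structure of $D$, we compute the commutators of the four basis matrices $u,v,w,z$ directly. The derivations $w$ and $z$ are the elementary matrices $E_{31}$ and $E_{32}$. Their products vanish, so $\F w\oplus\F z$ is abelian. The derivation $u=\mathrm{diag}(1,1,2)$ and the derivation $v=E_{21}+(1+\alpha)(E_{22}+E_{33})$ are both lower triangular. Their commutator is $[u,v]=[u,E_{21}]=(1-1)E_{21}=0$, so $\F u\oplus\F v$ is an abelian subalgebra. The mixed brackets are computed with $[E_{ii},E_{jk}]$ and $E_{3k}E_{21}$:
\[
[u,w]=(2-1)E_{31}=w,\qquad [u,z]=(2-1)E_{32}=z,
\]
\[
[v,w]=(1+\alpha)E_{31}-0=(1+\alpha)w,\qquad [v,z]=-E_{32}E_{21}+(1+\alpha)(E_{32}-E_{32})=-w.
\]
All these brackets lie in $\F w\oplus\F z$, which gives the ideal property and the semidirect decomposition. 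The directions of the signs are the only point worth double-checking.
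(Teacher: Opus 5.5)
Your proof is correct and takes essentially the same route as the paper. You use Lemma~\ref{lem:invariant} to get $d(a_3)\in\F a_3$, then take the coefficient equations from the four products of $a_1,a_2$: $\alpha\ne-1$ kills the $a_1$-coefficient of $d(a_2)$, and the relation $[a_1,a_2]=\alpha a_3$ turns out redundant. Finally you compute the commutators of the basis matrices directly, as the paper does.

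One side remark should be corrected. $\Leib(L)=\F a_3$ holds for every $\alpha$, simply because $[a_1,a_1]=a_3$, so the hypothesis $\alpha\ne0$ is not what guarantees it; in fact $\alpha\ne0$ is not used anywhere in your computation.
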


\begin{proof}
Let $d\in\Der(L)$.  Since $\Leib(L)=\F a_3$, Lemma~\ref{lem:invariant} gives
\[
\begin{aligned}
 d(a_1)&=a a_1+b a_2+c a_3,\\
 d(a_2)&=p a_1+q a_2+d a_3,\\
 d(a_3)&=s a_3.
\end{aligned}
\]
We use the defining products one by one.

From $[a_1,a_1]=a_3$ we obtain
\[
\begin{aligned}
 s a_3
 &= [d(a_1),a_1]+[a_1,d(a_1)]\\
 &= (a+b)a_3+(a+\alpha b)a_3.
\end{aligned}
\]
Hence
\begin{equation}\label{eq:L3s}
 s=2a+(1+\alpha)b.
\end{equation}

Since $[a_2,a_2]=0$,
\[
 0=[d(a_2),a_2]+[a_2,d(a_2)]
   =(1+\alpha)p a_3.
\]
The assumption $\alpha\ne-1$ gives
\begin{equation}\label{eq:L3p}
 p=0.
\end{equation}

Now apply $d$ to $[a_2,a_1]=a_3$.  Using $p=0$, we have
\[
 s a_3=[q a_2+d a_3,a_1]+[a_2,a a_1+b a_2+c a_3]
       =(q+a)a_3.
\]
Therefore
\[
 q=s-a=a+(1+\alpha)b.
\]
Finally, the relation $[a_1,a_2]=\alpha a_3$ gives the same equality and no new condition.  Products involving $a_3$ are zero on both sides.  This proves that every derivation has the displayed form.

Conversely, take a linear map with the displayed matrix.  Substitution in the three nonzero products and in $[a_2,a_2]=0$ gives the equalities above in reverse order.  Since every product involving $a_3$ is zero, the derivation identity holds for all basis pairs.  Hence the displayed set is exactly $\Der(L)$.  The structural assertions follow by taking the commutators of the basis derivations $u,v,w,z$ defined in the theorem.
\end{proof}

\begin{theorem}\label{thm:L4}
Let $L=L_4$.

If $\operatorname{char}\F\ne2$, then
\[
\Der(L)=
\left\{
\begin{pmatrix}
 a&0&0\\
 b&a&0\\
 c&d&2a
\end{pmatrix}
:a,b,c,d\in\F
\right\},
\qquad \dim\Der(L)=4.
\]

If $\operatorname{char}\F=2$, then
\[
\Der(L)=
\left\{
\begin{pmatrix}
 a&p&0\\
 b&a+p&0\\
 c&d&0
\end{pmatrix}
:a,b,c,d,p\in\F
\right\},
\qquad \dim\Der(L)=5.
\]

If $\operatorname{char}\F\ne2$, put $D=\Der(L)$ and let $u,v,w,z\in D$ correspond, respectively, to $a,b,c,d$ in the first displayed matrix.  Then $u,v,w,z$ form a basis of $D$, $\F w\oplus\F z$ is an abelian ideal, $\F u\oplus\F v$ is an abelian subalgebra, and
\[
 D=(\F w\oplus\F z)\rtimes(\F u\oplus\F v),
\]
where
\[
 [u,w]=w,\qquad [u,z]=z,\qquad [v,z]=-w,
\]
and the remaining brackets between these two summands are zero.

If $\operatorname{char}\F=2$, let $u,v,w,z,t\in D$ correspond, respectively, to the parameters $a,p,b,c,d$ in the second displayed matrix.  Then $u,v,w,z,t$ form a basis of $D$.  The subspace $\F z\oplus\F t$ is an abelian ideal.  If
\[
 H=\F u\oplus\F v\oplus\F w,
\]
then $H$ is a subalgebra and
\[
 D=(\F z\oplus\F t)\rtimes H.
\]
Inside $H$ we have
\[
 [v,w]=u+w.
\]
Moreover, $\F u$ is central in $H$ and
\[
 H=(\F(u+w)\rtimes\F v)\oplus\F u,
 \qquad [v,u+w]=u+w.
\]
The action on the ideal $\F z\oplus\F t$ is
\[
 [u,z]=z,\quad [u,t]=t,\quad [v,z]=t,\quad
 [v,t]=t,\quad [w,t]=z.
\]
\end{theorem}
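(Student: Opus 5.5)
We follow the same scheme as in the proof of Theorem~\ref{thm:L3}. Let $d\in\Der(L)$. Since $\Leib(L)=\F a_3$, Lemma~\ref{lem:invariant} allows us to write
\[
 d(a_1)=a a_1+b a_2+c a_3,\qquad d(a_2)=p a_1+q a_2+d a_3,\qquad d(a_3)=s a_3 .
\]
The products involving $a_3$ vanish on both sides of the derivation identity, since $a_3$ lies in both the left and right annihilators. Hence only the four relations $[a_i,a_j]$ with $i,j\in\{1,2\}$ give conditions.

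The four relations give the following linear equations.
\begin{itemize}
\item From $[a_1,a_1]=a_3$: $s=(a+b)+(a-b)=2a$.
\item From $[a_2,a_2]=0$: $0=[pa_1,a_2]+[a_2,pa_1]=(-p+p)a_3$, so no condition arises.
\item From $[a_2,a_1]=a_3$: $s=[pa_1+qa_2,a_1]+[a_2,aa_1+ba_2]=(p+q)+a$.
\item From $[a_1,a_2]=-a_3$: $-s=[aa_1+ba_2,a_2]+[a_1,pa_1+qa_2]=-a+p-q$, that is, $s=a-p+q$.
\end{itemize}
Comparing the last two equations gives $2p=0$.

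If $\operatorname{char}\F\ne2$, then $p=0$, $q=a$ and $s=2a$, which is the first matrix. If $\operatorname{char}\F=2$, then $p$ is free and $s=2a=0$. From $s=p+q+a$ we get $q=a+p$ (signs are immaterial in characteristic two). This is the second matrix. In both cases the converse is the same computation read backwards, exactly as in Theorem~\ref{thm:L3}.

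The main work is the structural part, especially in characteristic~$2$, where we must check each bracket on matrices. We write $E_{ij}$ for the matrix units. In characteristic two the basis derivations are
\[
 u=E_{11}+E_{22},\qquad v=E_{12}+E_{22},\qquad w=E_{21},\qquad z=E_{31},\qquad t=E_{32}.
\]
We then verify the following.
\begin{itemize}
\item $u$ is the identity on $\langle a_1,a_2\rangle$. Hence it commutes with $v$ and $w$, and $[u,z]=z$, $[u,t]=t$.
\item $[v,w]=vw-wv=(E_{12}+E_{22})E_{21}-E_{21}(E_{12}+E_{22})=E_{22}-E_{11}=u+w$? This needs care: $E_{12}E_{21}=E_{11}$ and $E_{22}E_{21}=E_{21}$, while $E_{21}E_{12}=E_{22}$. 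So $vw=E_{11}+E_{21}$, $wv=E_{22}$, and $[v,w]=E_{11}+E_{22}+E_{21}=u+w$ in characteristic two.
\item $[v,u+w]=[v,w]=u+w$, since $u$ commutes with $v$.
\item The actions on $z$ and $t$ follow from left and right multiplication by the $E_{3j}$. For example, $[v,z]=-zv=-E_{32}=t$ and $[w,t]=-tw=-E_{31}=z$ (up to sign, in characteristic two), and similarly $[v,t]=t$.
\end{itemize}
The products of the $E_{3j}$ with each other vanish, so $\F z\oplus\F t$ is an abelian ideal, and $H$ is closed under the bracket.

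The characteristic $\ne2$ case is the $\alpha=-1$ specialization of the bracket computation in Theorem~\ref{thm:L3}, and we can reuse it verbatim.
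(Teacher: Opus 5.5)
Your proposal is correct and follows the paper's proof essentially line by line: the same ansatz from Lemma~\ref{lem:invariant}, the same four coefficient equations ($s=2a$, no condition from $[a_2,a_2]$, $s=p+q+a$, $-s=-a+p-q$) leading to $2p=0$, and the structural claims obtained by direct matrix multiplication, which you actually carry out with matrix units where the paper only asserts it. In a final write-up, delete the aborted line claiming $[v,w]=E_{22}-E_{11}$. Also state explicitly that brackets such as $[u,z]=-z$, $[v,z]=-t$ and $[w,t]=-z$ coincide with the stated ones only because $-1=1$ in characteristic two.
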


\begin{proof}
Write
\[
\begin{aligned}
 d(a_1)&=a a_1+b a_2+c a_3,\\
 d(a_2)&=p a_1+q a_2+d a_3,\\
 d(a_3)&=s a_3.
\end{aligned}
\]
From $[a_1,a_1]=a_3$ we get
\[
 s=2a.
\]
The relation $[a_2,a_2]=0$ gives no condition in this case.  From $[a_2,a_1]=a_3$ we obtain
\[
 s=p+q+a,
\]
while $[a_1,a_2]=-a_3$ gives
\[
 -s=-a+p-q.
\]
Substituting $s=2a$ into these two equalities gives
\[
 q=a-p,\qquad q=a+p.
\]
Hence
\[
 2p=0.
\]
If $\operatorname{char}\F\ne2$, then $p=0$, $q=a$, and the first matrix follows.  If $\operatorname{char}\F=2$, the parameter $p$ is free, $q=a+p$, and $s=0$.  This gives the second matrix.

Conversely, direct substitution in
\[
 [a_1,a_1]=[a_2,a_1]=a_3,
 \qquad [a_1,a_2]=-a_3,
 \qquad [a_2,a_2]=0
\]
shows that every displayed matrix defines a derivation.  Thus no additional condition is required.  The decompositions and the listed commutators follow by direct multiplication of the matrices of the basis derivations defined in the theorem.
\end{proof}

\begin{theorem}\label{thm:L5}
Let $L=L_5(\beta)$.  Then $\beta\ne0$.

If $\operatorname{char}\F\ne2$, then
\[
\Der(L)=
\left\{
\begin{pmatrix}
 a&-\beta b&0\\
 b&a&0\\
 c&d&2a
\end{pmatrix}
:a,b,c,d\in\F
\right\},
\qquad \dim\Der(L)=4.
\]

If $\operatorname{char}\F=2$, then
\[
\Der(L)=
\left\{
\begin{pmatrix}
 a&\beta b&0\\
 b&e&0\\
 c&d&0
\end{pmatrix}
:a,b,c,d,e\in\F
\right\},
\qquad \dim\Der(L)=5.
\]

If $\operatorname{char}\F\ne2$, put $D=\Der(L)$ and let $u,v,w,z\in D$ correspond, respectively, to $a,b,c,d$ in the first displayed matrix.  Then $u,v,w,z$ form a basis of $D$.  The subspace $\F w\oplus\F z$ is an abelian ideal, $\F u\oplus\F v$ is an abelian subalgebra, and
\[
 D=(\F w\oplus\F z)\rtimes(\F u\oplus\F v),
\]
with
\[
 [u,w]=w,\qquad [u,z]=z,\qquad
 [v,w]=\beta z,\qquad [v,z]=-w.
\]

If $\operatorname{char}\F=2$, let $u,v,w,z,t\in D$ correspond, respectively, to the parameters $a,b,c,d,e$ in the second displayed matrix.  Then $u,v,w,z,t$ form a basis of $D$.  The subspace $\F w\oplus\F z$ is an abelian ideal.  If
\[
 H=\F u\oplus\F v\oplus\F t,
\]
then $H$ is a subalgebra and
\[
 D=(\F w\oplus\F z)\rtimes H.
\]
Inside $H$,
\[
 [u,v]=v,\qquad [t,v]=v,
\]
so $u+t$ is central in $H$ and
\[
 H=(\F v\rtimes\F u)\oplus\F(u+t).
\]
The action on $\F w\oplus\F z$ is
\[
 [u,w]=w,\qquad [t,z]=z,\qquad
 [v,w]=\beta z,\qquad [v,z]=w.
\]
\end{theorem}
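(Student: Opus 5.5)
I would follow the same coefficient method as in Theorems~\ref{thm:L3} and~\ref{thm:L4}. The first step is $\beta\ne0$: if $\beta=0$, the polynomial $X^2+\beta$ has the root $0$, which the definition of $L_5(\beta)$ excludes. Next, $\Leib(L)=\F a_3$, because $[a_1,a_1]=a_3$ and every product lies in $\F a_3$. Lemma~\ref{lem:invariant} then lets me write
\[
 d(a_1)=aa_1+ba_2+ca_3,\qquad d(a_2)=pa_1+ea_2+da_3,\qquad d(a_3)=sa_3 .
\]
Every product involving $a_3$ is zero, and $d(a_j)$ has an $a_3$-component only through brackets that vanish against $a_3$. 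So only the four relations $[a_1,a_1]=a_3$, $[a_2,a_2]=\beta a_3$, $[a_1,a_2]=0$ and $[a_2,a_1]=0$ impose conditions.

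Expanding these four relations (the products are symmetric here, and $[a_2,a_2]=\beta a_3$) gives:
\[
 s=2a,\qquad \beta s=2\beta e,\qquad 0=\beta b+p,\qquad 0=p+\beta b .
\]
The last two coincide, so $p=-\beta b$. This step is where I would be most careful: the relations $[a_1,a_2]=[a_2,a_1]=0$ are symmetric and yield the same single equation, so nothing is lost or doubled.

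The case split is now immediate.
\begin{itemize}
\item If $\operatorname{char}\F\ne2$, then $s=2a$ and $\beta\ne0$ give $e=s/2=a$. This is the first matrix, with four free parameters $a,b,c,d$.
\item If $\operatorname{char}\F=2$, then $s=0$ and the condition $\beta s=2\beta e$ is vacuous, so $e$ is free. Also $p=-\beta b=\beta b$, which gives the second matrix with five free parameters.
\end{itemize}
Conversely, substituting either matrix back into the four relations reverses these equalities, so the displayed sets are exactly $\Der(L)$.

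The structural statements follow by multiplying the basis matrices. For instance, in characteristic $\ne2$, $v$ is $E_{21}-\beta E_{12}$, $w=E_{31}$ and $z=E_{32}$. Then $vw=0$ and $wv=E_{32}(-\beta)\cdot(-1)$ up to sign, which gives $[v,w]=\beta z$; similarly $[v,z]=-w$. The brackets $[u,w]=w$ and $[u,z]=z$ come from the diagonal $(1,1,2)$ of $u$.

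In characteristic $2$, $u=\mathrm{diag}(1,0,0)$ and $t=\mathrm{diag}(0,1,0)$. This gives $[u,v]=v$, $[t,v]=v$ (signs vanish in characteristic $2$), $[u,w]=w$, $[t,z]=z$, $[v,w]=\beta z$ and $[v,z]=w$. Hence $u+t$ commutes with $v$, and $H=(\F v\rtimes\F u)\oplus\F(u+t)$.

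Closure of $\F w\oplus\F z$ under brackets with all of $D$, and that it is abelian ($wz=zw=0$), are immediate from the matrix shapes. The main obstacle is only bookkeeping of signs in these commutators; the derivation computation itself is routine.
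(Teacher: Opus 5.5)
Your proposal is correct and follows the paper's proof essentially step for step: the same ansatz from Lemma~\ref{lem:invariant}, the same equations $s=2a$, $\beta s=2\beta e$ and $p=-\beta b$, the same split on the characteristic, and the structural claims obtained by direct matrix multiplication. The only blemish is your garbled sign remark for $wv$; in fact $vw=0$ and $wv=-\beta z$, which gives $[v,w]=\beta z$ exactly as you conclude.
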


\begin{proof}
The polynomial $X^2+\beta$ has no root, so $\beta\ne0$.  Put
\[
\begin{aligned}
 d(a_1)&=a a_1+b a_2+c a_3,\\
 d(a_2)&=p a_1+q a_2+d a_3,\\
 d(a_3)&=s a_3.
\end{aligned}
\]
From $[a_1,a_1]=a_3$ we get
\begin{equation}\label{eq:L5a}
 s=2a.
\end{equation}
From $[a_2,a_2]=\beta a_3$ we obtain
\[
 \beta s=2\beta q,
\]
and hence
\begin{equation}\label{eq:L5q}
 2(a-q)=0.
\end{equation}
The zero product $[a_1,a_2]=0$ gives
\[
 0=[d(a_1),a_2]+[a_1,d(a_2)]
   =(\beta b+p)a_3,
\]
so
\begin{equation}\label{eq:L5p}
 p=-\beta b.
\end{equation}
The relation $[a_2,a_1]=0$ gives the same condition.

If $\operatorname{char}\F\ne2$, equation~\eqref{eq:L5q} gives $q=a$, and we obtain the first family.  If $\operatorname{char}\F=2$, then $s=0$, the coefficient $q$ is free, and $-\beta b=\beta b$.  Renaming $q$ as $e$ gives the second family.

For the converse, the two square relations and the two zero mixed products have already been checked in the calculation above.  All products involving $a_3$ are zero.  Hence every displayed matrix is a derivation.  Direct multiplication of the matrices of the basis derivations gives the decompositions and commutators stated in the theorem.
\end{proof}

\begin{remark}
A calculation for the same general nilpotent situation appears in~\cite{DerLowDims}.  For the present representative, however, the full arbitrary-field coefficient relations are most transparent when they are written directly from the defining products.  This is the reason for including the complete calculation above.
\end{remark}

\begin{theorem}\label{thm:L6}
Let $L=L_6(\delta)$.  Then
\[
\Der(L)=
\left\{
\begin{pmatrix}
 a&-\delta b&0\\
 b&a+b&0\\
 c&d&2a+b
\end{pmatrix}
:a,b,c,d\in\F
\right\}.
\]
In particular, $\dim_{\F}\Der(L)=4$ in every characteristic.  Put $D=\Der(L)$, and let $u,v,w,z\in D$ correspond, respectively, to the parameters $a,b,c,d$ in the displayed matrix.  Then $u,v,w,z$ form a basis of $D$.  The subspace $\F w\oplus\F z$ is an abelian ideal, $\F u\oplus\F v$ is an abelian subalgebra, and
\[
 D=(\F w\oplus\F z)\rtimes(\F u\oplus\F v).
\]
The action is
\[
 [u,w]=w,\qquad [u,z]=z,\qquad
 [v,w]=w+\delta z,\qquad [v,z]=-w.
\]
\end{theorem}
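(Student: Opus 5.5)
The plan is to follow the same coefficient calculation as in Theorems~\ref{thm:L3}--\ref{thm:L5}. First note that $\delta\ne0$, since otherwise $X=0$ would be a root of $X^2+X+\delta$. Next, since $\Leib(L)=\F a_3$, Lemma~\ref{lem:invariant} lets me write
\[
 d(a_1)=a a_1+b a_2+c a_3,\qquad d(a_2)=p a_1+q a_2+d a_3,\qquad d(a_3)=s a_3 .
\]
Every product lies in $\F a_3$ and every product involving $a_3$ is zero. So it suffices to record the bilinear form $B$ with $[x,y]=B(x,y)a_3$, where
\[
 B(a_1,a_1)=1,\quad B(a_1,a_2)=1,\quad B(a_2,a_1)=0,\quad B(a_2,a_2)=\delta .
\]
I then impose the four conditions $s\,B(a_i,a_j)=B(d(a_i),a_j)+B(a_i,d(a_j))$ for $i,j\in\{1,2\}$.

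I would take the relations in the following order.
\begin{itemize}
\item From $[a_1,a_1]=a_3$ we get $s=a+(a+b)=2a+b$.
\item The zero product $[a_2,a_1]=0$ gives $0=p+\delta b$, so $p=-\delta b$. This is the key point: the entry $p$ is forced by a mixed product and not by a square relation. This is why, unlike for $L_4$ and $L_5(\beta)$, no factor $2$ appears and no extra parameter survives in characteristic two.
\item From $[a_1,a_2]=a_3$ we get $s=(a+\delta b)+(p+q)=a+q$, hence $q=a+b$.
\item Finally, $[a_2,a_2]=\delta a_3$ requires $\delta s=p+2\delta q$. Substituting the values above gives $-\delta b+2\delta(a+b)=\delta(2a+b)$, an identity in every characteristic.
\end{itemize}
This yields exactly the displayed matrix, and the converse follows by reading the same four equalities backwards, as in the earlier proofs. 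In particular $\dim\Der(L)=4$ in every characteristic.

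For the structure, I would compute the matrices of $u,v,w,z$ and their commutators directly.
\begin{itemize}
\item The maps $w=E_{31}$ and $z=E_{32}$ commute, because they both map into $\F a_3$ and kill $a_3$. The map $u=\operatorname{diag}(1,1,2)$ gives $[u,w]=w$ and $[u,z]=z$.
\item The map $v$ has matrix $-\delta E_{12}+E_{21}+E_{22}+E_{33}$. It commutes with $u$, because $u$ is scalar on the block spanned by $a_1,a_2$ and $v$ preserves that block and $\F a_3$.
\item The products $vw$ and $wv$ give $[v,w]=w+\delta z$.
\item Similarly, $vz-zv=E_{32}-(E_{31}+E_{32})=-w$.
\end{itemize}
Hence $\F w\oplus\F z$ is an abelian ideal complemented by the abelian subalgebra $\F u\oplus\F v$.

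The only step needing care is the sign and indexing bookkeeping in the commutator $[v,w]$, which is where the $\delta z$ term arises. I do not expect any genuine obstacle: the characteristic plays no role, because the square relation for $a_2$ turns out to be automatically satisfied.
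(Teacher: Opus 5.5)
Your proposal is correct and follows the paper's proof essentially step for step. It uses the same parametrization via Lemma~\ref{lem:invariant} and the same order of relations: $[a_1,a_1]$ gives $s=2a+b$, the zero product $[a_2,a_1]=0$ forces $p=-\delta b$, $[a_1,a_2]$ gives $q=a+b$, and $[a_2,a_2]$ then holds automatically. Your structure part is likewise the paper's direct commutator computation, and your explicit matrix checks $[v,w]=w+\delta z$ and $[v,z]=-w$ are accurate.
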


\begin{proof}
Let
\[
\begin{aligned}
 d(a_1)&=a a_1+b a_2+c a_3,\\
 d(a_2)&=p a_1+q a_2+d a_3,\\
 d(a_3)&=s a_3.
\end{aligned}
\]
Applying $d$ to $[a_1,a_1]=a_3$ gives
\begin{equation}\label{eq:L6s}
 s=2a+b.
\end{equation}
Indeed,
\[
 [d(a_1),a_1]=a a_3,
 \qquad
 [a_1,d(a_1)]=(a+b)a_3.
\]

Next, $[a_2,a_1]=0$ gives
\[
 0=[d(a_2),a_1]+[a_2,d(a_1)]
   =(p+\delta b)a_3,
\]
so
\begin{equation}\label{eq:L6p}
 p=-\delta b.
\end{equation}
Using $[a_1,a_2]=a_3$, we obtain
\[
 s=a+\delta b+p+q.
\]
Together with~\eqref{eq:L6s} and~\eqref{eq:L6p}, this gives
\begin{equation}\label{eq:L6q}
 q=a+b.
\end{equation}

It remains to check the square of $a_2$.  We have
\[
\begin{aligned}
 [d(a_2),a_2]+[a_2,d(a_2)]
 &=p a_3+2\delta q a_3\\
 &=\delta(2a+b)a_3
 =\delta d(a_3),
\end{aligned}
\]
where we used~\eqref{eq:L6p} and~\eqref{eq:L6q}.  Thus no further condition occurs.

Conversely, a map with the displayed matrix satisfies the derivation identity for $[a_1,a_1]$, $[a_1,a_2]$, $[a_2,a_2]$, and $[a_2,a_1]=0$ by the same equalities.  All products involving $a_3$ vanish.  Hence it is a derivation.  The final decomposition is obtained by taking commutators of the matrices of the basis derivations defined in the theorem.
\end{proof}

\begin{remark}
The algebra $L_6(\delta)$ is also related to a family considered in~\cite{DerLowDims}.  The direct calculation above is included because it gives one formula valid in every characteristic and in the present normalization.
\end{remark}

\section{The remaining cases with one-dimensional Leibniz kernel}

The derivation algebras of $L_7$ and $L_8(\lambda)$ are already contained in~\cite{DerNonNilpotent3}; the changes of presentation are described in Remark~\ref{rem:basis}.  In our basis,
\[
 \Der(L_7)=
 \left\{
 \begin{pmatrix}
 0&0&0\\ a&b&0\\ c&0&0
 \end{pmatrix}:a,b,c\in\F
 \right\},
 \qquad \dim\Der(L_7)=3,
\]
and, when $\operatorname{char}\F=2$,
\[
 \Der(L_8(\lambda))=
 \left\{
 \begin{pmatrix}
 0&0&0\\ a&b&0\\ c&a&0
 \end{pmatrix}:a,b,c\in\F
 \right\},
 \qquad \dim\Der(L_8(\lambda))=3.
\]
In particular, the derivation algebra of $L_8(\lambda)$ does not depend on $\lambda$.  For either $L=L_7$ or $L=L_8(\lambda)$, put $D=\Der(L)$ and let $u,v,w\in D$ correspond, respectively, to the parameters $a,b,c$ in the displayed matrix.  Then $u,v,w$ form a basis of $D$,
\[
 [v,u]=u,\qquad w\in\zeta(D),
\]
and therefore
\[
 D=(\F u\rtimes\F v)\oplus\F w.
\]

\begin{theorem}\label{thm:L9}
Let $L=L_9$.  Then
\[
\Der(L)=
\left\{
\begin{pmatrix}
0&0&0\\
a&b&0\\
c&0&c
\end{pmatrix}
:a,b,c\in\F
\right\}.
\]
Thus $\dim_{\F}\Der(L)=3$ in every characteristic.  Put $D=\Der(L)$, and let $u,v,w\in D$ correspond, respectively, to the parameters $a,b,c$ in the displayed matrix.  Then $u,v,w$ form a basis of $D$,
\[
 [v,u]=u,\qquad w\in\zeta(D),
\]
and hence
\[
 D=(\F u\rtimes\F v)\oplus\F w.
\]
\end{theorem}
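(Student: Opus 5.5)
We proceed by the same direct coefficient calculation as in Theorems~\ref{thm:L3}--\ref{thm:L6}, starting from the invariant subspaces given by Lemma~\ref{lem:invariant}. In $L_9$ the only nonzero products are $[a_1,a_1]=[a_1,a_3]=a_3$, so $\Leib(L)=[L,L]=\F a_3$. Hence $d(a_3)=s a_3$. We also note that $a_2$ and $a_3$ lie in the left center, since $[a_2,x]=[a_3,x]=0$ for all $x$. The element $a_2$ lies in the center, because in addition $[x,a_2]=0$ for all $x$. By Lemma~\ref{lem:invariant}, $d$ preserves the center $\F a_2$, so $d(a_2)=q a_2$. We write
\[
 d(a_1)=a_{11}a_1+b a_2+c a_3,\qquad d(a_2)=q a_2,\qquad d(a_3)=s a_3,
\]
and check the center claim first by a one-line computation: $[a_1,x]=0$ forces the $a_1$- and $a_3$-coordinates of $x$ to cancel, and $[a_3,a_1]=0$ but $[a_1,a_3]\ne0$. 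So the center is exactly $\F a_2$.

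Next we apply $d$ to the defining products. From $[a_1,a_3]=a_3$ we get
\[
 s a_3=[d(a_1),a_3]+[a_1,d(a_3)]=a_{11}a_3+s a_3,
\]
so $a_{11}=0$. From $[a_1,a_1]=a_3$ we get
\[
 s a_3=[d(a_1),a_1]+[a_1,d(a_1)]=c a_3+c a_3,
\]
where the first bracket uses $[a_3,a_1]=0$ and $a_{11}=0$. The second bracket is $[a_1,b a_2+c a_3]=c a_3$. This would give $s=c$, not $2c$; I expect this to be the delicate point. Recomputing, $[d(a_1),a_1]=[c a_3,a_1]=0$ and $[a_1,d(a_1)]=c a_3$, so in fact $s=c$. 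The zero products then give no further restrictions: $[a_2,\cdot]=0$ and $[\cdot,a_2]=0$ are preserved because $d(a_2)\in\F a_2$ lies in the center. The products $[a_3,a_1]$ and $[a_3,a_3]$ are preserved since $[a_3,x]=0$ and $[d(a_3),\cdot]=0$, and $[a_1,a_3]$ has already been used. The remaining ones to check are $[a_3,a_3]=0$, which gives $0=s[a_3,a_3]+s[a_3,a_3]=0$, and $[a_1,a_2]=[a_2,a_1]=0$, which hold automatically. Renaming $b\to a$ and $q\to b$ gives exactly the displayed matrix, with $c$ appearing in positions $(3,1)$ and $(3,3)$. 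No division by $2$ occurs, so the result holds in every characteristic.

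For the converse, direct substitution of the displayed matrix into the nonzero products $[a_1,a_1]$ and $[a_1,a_3]$ reverses the equalities above. All other basis products vanish on both sides. For the structure, we take $u=E_{21}$, $v=E_{22}$ and $w=E_{31}+E_{33}$ and compute the commutators directly: $[v,u]=E_{22}E_{21}-E_{21}E_{22}=E_{21}=u$. Similarly, $w$ commutes with $u$ and $v$, since $E_{21}$ and $E_{22}$ have no products with $E_{31}$ and $E_{33}$ in either order. Hence $w\in\zeta(D)$, and $D=(\F u\rtimes\F v)\oplus\F w$. The main obstacle is the bookkeeping of left versus right products in $[a_1,a_1]$, which determines the correct relation $s=c$.
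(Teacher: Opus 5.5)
Your proof is correct and follows the paper's argument essentially verbatim: invariance of $\Leib(L)=\F a_3$ and of $\zeta(L)=\F a_2$ under Lemma~\ref{lem:invariant}, then $[a_1,a_3]=a_3$ kills the $a_1$-coefficient of $d(a_1)$, and $[a_1,a_1]=a_3$ gives $s=c$, followed by the same converse check and the same commutator computation with $E_{21}$, $E_{22}$, $E_{31}+E_{33}$. In a final write-up, delete the erroneous intermediate display ending in $c a_3+c a_3$, since $[d(a_1),a_1]=0$. Also state the center computation in full: for $x=x_1a_1+x_2a_2+x_3a_3$ one has $[x,a_1]=x_1a_3$ and $[a_1,x]=(x_1+x_3)a_3$, so a central $x$ has $x_1=x_3=0$.
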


\begin{proof}
Here
\[
 \Leib(L)=[L,L]=\F a_3,
 \qquad
 \zeta(L)=\F a_2.
\]
By Lemma~\ref{lem:invariant}, write
\[
 d(a_1)=x a_1+a a_2+u a_3,
 \qquad
 d(a_2)=b a_2,
 \qquad
 d(a_3)=c a_3.
\]
Apply $d$ to $[a_1,a_3]=a_3$.  We obtain
\[
 c a_3=[d(a_1),a_3]+[a_1,d(a_3)]
        =(x+c)a_3,
\]
so $x=0$.  Now use $[a_1,a_1]=a_3$:
\[
 c a_3=[a a_2+u a_3,a_1]+[a_1,a a_2+u a_3]
       =u a_3.
\]
Hence $u=c$.  This gives exactly the displayed matrix.

Conversely, for such a matrix,
\[
 d([a_1,a_1])=d(a_3)=c a_3
 =[d(a_1),a_1]+[a_1,d(a_1)],
\]
and
\[
 d([a_1,a_3])=c a_3
 =[d(a_1),a_3]+[a_1,d(a_3)].
\]
All remaining products are zero and satisfy the derivation identity immediately from the displayed images.  Therefore the formula is complete.  The commutator calculation for the three basis derivations gives $[v,u]=u$ and shows that $w$ is central, proving the structural assertion.
\end{proof}

\begin{remark}\label{rem:L9old}
A related calculation for this multiplication appears in~\cite{DerNonNilpotent4}.  When its matrix family is compared with the present basis, the coefficient of $a_2$ in $d(a_1)$ is not present.  This coefficient is in fact free: for example,
\[
 d(a_1)=a_2,\qquad d(a_2)=d(a_3)=0
\]
is a derivation of $L_9$.  Theorem~\ref{thm:L9} therefore records the full family in the present basis.
\end{remark}

\begin{theorem}\label{thm:L10}
Let $L=L_{10}(r)$, where $r\in\Fx$.  Then
\[
\Der(L)=
\left\{
\begin{pmatrix}
0&0&0\\
a&b&0\\
c&0&rc
\end{pmatrix}
:a,b,c\in\F
\right\}.
\]
Hence $\dim_{\F}\Der(L)=3$.  The dimension and the number of free parameters are independent of $r$.  Put $D=\Der(L)$, and let $u,v,w\in D$ correspond, respectively, to $a,b,c$.  Then $u,v,w$ form a basis of $D$,
\[
 [v,u]=u,\qquad w\in\zeta(D),
\]
and
\[
 D=(\F u\rtimes\F v)\oplus\F w.
\]
In particular, the Lie-algebra structure of $\Der(L_{10}(r))$ is independent of $r\ne0$.
\end{theorem}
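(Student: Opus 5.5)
The plan follows the pattern of Theorems~\ref{thm:L3}--\ref{thm:L9}. First I will use Lemma~\ref{lem:invariant} to restrict the shape of the matrix. Then I will apply $d$ to each defining product and solve the resulting coefficient equations. Finally I will check the converse and compute the commutators.

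For $x=x_1a_1+x_2a_2+x_3a_3$, the mixed terms cancel because $[a_1,a_2]=-[a_2,a_1]$, so $[x,x]=(x_1^2+rx_1x_3)a_3$. Hence $\Leib(L)=\F a_3$ and $[L,L]=\F a_2\oplus\F a_3$. Lemma~\ref{lem:invariant} therefore lets me write
\[
 d(a_1)=xa_1+aa_2+ca_3,\qquad d(a_2)=ba_2+qa_3,\qquad d(a_3)=sa_3 .
\]

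I will then take the products in the following order.
\begin{itemize}
\item From $[a_1,a_3]=ra_3$ I get $rs=rx+rs$. Since $r\ne0$, this forces $x=0$.
\item From $[a_1,a_1]=a_3$ the $a_2$-contributions $-aa_2$ and $+aa_2$ cancel, leaving $s=rc$.
\item From $[a_1,a_2]=a_2$ I get $ba_2+qa_3=ba_2+rqa_3$, that is, $(1-r)q=0$.
\item From $[a_2,a_1]=-a_2$ I get $-(ba_2+qa_3)=-ba_2+[a_2,aa_2+ca_3]=-ba_2$, hence $q=0$.
\end{itemize}
I expect the coefficient $q$ to be the only delicate point. The relation $[a_1,a_2]=a_2$ alone leaves $q$ free when $r=1$, so the product $[a_2,a_1]=-a_2$ is essential. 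Using it disposes of $q$ uniformly in $r$ and in every characteristic, with no case split.

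It remains to check the zero products: $[a_2,a_2]$, $[a_2,a_3]$, $[a_3,a_i]$ and $[a_3,a_3]$. For these, both sides of the derivation identity vanish. This is because $a_3$ lies in the left center, $[a_2,a_2]=[a_2,a_3]=0$, and $d(a_2)\in\F a_2$, $d(a_3)\in\F a_3$. So no new conditions arise, and the matrix has exactly the displayed form with parameters $a,b,c$.

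The same list of products, read in reverse, shows that every displayed matrix is a derivation. For the structure, I will multiply the matrices $u=E_{21}$, $v=E_{22}$ and $w=E_{31}+rE_{33}$ directly. Here $vu=u$ and $uv=0$, so $[v,u]=u$. Both products $w$ with $u$ and $w$ with $v$ vanish in either order, so $w\in\zeta(D)$. This gives $D=(\F u\rtimes\F v)\oplus\F w$. Neither the multiplication table of $D$ nor this decomposition involves $r$, which yields the final independence statement.
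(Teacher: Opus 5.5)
Your proposal is correct and follows essentially the same route as the paper: you use invariance of $\Leib(L)=\F a_3$ and $[L,L]=\F a_2\oplus\F a_3$ to fix the shape of the matrix. You then use $[a_1,a_3]=ra_3$ to force $x=0$, $[a_2,a_1]=-a_2$ to kill the $a_3$-coefficient of $d(a_2)$, and $[a_1,a_1]=a_3$ to get $d(a_3)=rc\,a_3$. The differences are only cosmetic: you also extract $(1-r)q=0$ from $[a_1,a_2]=a_2$, where the paper just notes that this product imposes no new condition, and you write out the zero products and the matrix commutators explicitly.
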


\begin{proof}
Since $\Leib(L)=\F a_3$ and $[L,L]=\F a_2\oplus\F a_3$, write
\[
\begin{aligned}
 d(a_1)&=x a_1+a a_2+c a_3,\\
 d(a_2)&=b a_2+u a_3,\\
 d(a_3)&=v a_3.
\end{aligned}
\]
From $[a_1,a_3]=r a_3$ we get
\[
 rv a_3=[d(a_1),a_3]+[a_1,d(a_3)]
        =r(x+v)a_3.
\]
Since $r\ne0$, we have
\begin{equation}\label{eq:L10x}
 x=0.
\end{equation}

Next apply $d$ to $[a_2,a_1]=-a_2$.  Using $x=0$,
\[
 -b a_2-u a_3
 =[b a_2+u a_3,a_1]+[a_2,a a_2+c a_3]
 =-b a_2.
\]
Thus
\begin{equation}\label{eq:L10u}
 u=0.
\end{equation}
Finally, from $[a_1,a_1]=a_3$,
\[
 v a_3=[a a_2+c a_3,a_1]+[a_1,a a_2+c a_3]
       =rc a_3,
\]
so $v=rc$.

The relation $[a_1,a_2]=a_2$ is then preserved without any additional condition.  Conversely, direct substitution in the four nonzero products proves that every displayed matrix is a derivation.  Direct multiplication of the matrices of $u,v,w$ gives the stated decomposition.
\end{proof}

\begin{theorem}\label{thm:L11}
Let $L=L_{11}$, so $\operatorname{char}\F\ne2$.  Then
\[
\Der(L)=
\left\{
\begin{pmatrix}
0&0&0\\
a&b&0\\
0&-a&2b
\end{pmatrix}
:a,b\in\F
\right\}.
\]
In particular, $\dim_{\F}\Der(L)=2$.  Put $D=\Der(L)$, and let $u,v\in D$ correspond, respectively, to $a,b$.  Then $u,v$ form a basis of $D$ and
\[
 [v,u]=u.
\]
Consequently,
\[
 D=\F u\rtimes\F v.
\]
\end{theorem}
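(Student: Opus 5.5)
The plan is to follow the same coefficient method as in Theorems~\ref{thm:L9} and~\ref{thm:L10}: first use Lemma~\ref{lem:invariant} to cut down the shape of an arbitrary derivation, then apply the derivation identity to each basis product, and finally check the converse and the commutator.

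\textbf{Reducing the shape.} Compute $[x,x]$ for a general $x=\xi_1a_1+\xi_2a_2+\xi_3a_3$. The terms $\xi_1\xi_2([a_1,a_2]+[a_2,a_1])$ cancel, so $[x,x]=(\xi_2^2+2\xi_1\xi_3)a_3$. Hence $\Leib(L)=\F a_3$. Also $[L,L]=\F a_2\oplus\F a_3$. By Lemma~\ref{lem:invariant} we may write
\[
 d(a_1)=x a_1+a a_2+c a_3,\qquad d(a_2)=b a_2+e a_3,\qquad d(a_3)=s a_3 .
\]

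\textbf{Applying the products.} I will use the defining products in the following order.
\begin{enumerate}
\item From $[a_1,a_3]=2a_3$ we get $2s=2x+2s$. Since $\operatorname{char}\F\ne2$, this gives $x=0$. This is the only place the characteristic hypothesis is needed.
\item From $[a_2,a_2]=a_3$ we get $s=2b$.
\item From $[a_2,a_1]=-a_2$, using $x=0$, we get $-ba_2-ea_3=-ba_2+aa_3$. Hence $e=-a$.
\item The relation $[a_1,a_2]=a_2$ gives $e=a+2e$, which is the same condition.
\item The one step that is easy to overlook is the zero product $[a_1,a_1]=0$. Applying $d$ gives $0=(-a+a)a_2+2c\,a_3$. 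Since $\operatorname{char}\F\ne2$, this forces $c=0$. This explains why the $(3,1)$ entry vanishes here, unlike in $L_9$ and $L_{10}(r)$.
\end{enumerate}
After these steps, the matrix has exactly the displayed form.

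\textbf{Converse.} Substitute the displayed matrix into every basis pair. The four nonzero products and $[a_1,a_1]=0$ are handled by the equalities above. The remaining pairs involve $a_3$ in a zero product, namely $[a_2,a_3]$, $[a_3,a_1]$, $[a_3,a_2]$ and $[a_3,a_3]$. Each of these reduces to $0=0$, because $a_3$ is a right annihilator on $a_2,a_3$ and a left annihilator on everything.

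\textbf{The commutator.} Let $v=\operatorname{diag}(0,1,2)$, and let $u$ be given by $u(a_1)=a_2$, $u(a_2)=-a_3$, $u(a_3)=0$. Compare $vu$ and $uv$ on the basis.
\begin{itemize}
\item On $a_1$: $vu(a_1)-uv(a_1)=a_2-0=u(a_1)$.
\item On $a_2$: $vu(a_2)-uv(a_2)=-2a_3+a_3=-a_3=u(a_2)$.
\item On $a_3$: both sides are $0$.
\end{itemize}
Therefore $[v,u]=u$, and so $D=\F u\rtimes\F v$.

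I expect no real obstacle. The only point needing care is remembering to impose the zero products, above all $[a_1,a_1]=0$, since that is what kills $c$.
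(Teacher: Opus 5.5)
Your proposal is correct and follows essentially the same route as the paper. You use Lemma~\ref{lem:invariant} to restrict the shape, then the products $[a_1,a_3]$, $[a_2,a_2]$, $[a_2,a_1]$, $[a_1,a_2]$ and the zero product $[a_1,a_1]$ to fix the coefficients, and finish with direct substitution for the converse and for $[v,u]=u$. One small slip: your remark that step~1 is the only place where $\operatorname{char}\F\ne2$ is needed is inaccurate, since step~5 also uses it to pass from $2c=0$ to $c=0$.
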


\begin{proof}
Since $\Leib(L)=\F a_3$ and $[L,L]=\F a_2\oplus\F a_3$, write
\[
\begin{aligned}
 d(a_1)&=x a_1+a a_2+c a_3,\\
 d(a_2)&=b a_2+u a_3,\\
 d(a_3)&=v a_3.
\end{aligned}
\]
Apply $d$ to $[a_1,a_3]=2a_3$.  We get
\[
 2v a_3=2(x+v)a_3.
\]
Since the characteristic is not two,
\begin{equation}\label{eq:L11x}
 x=0.
\end{equation}

From $[a_2,a_1]=-a_2$ we obtain
\[
 -b a_2-u a_3
 =[b a_2+u a_3,a_1]+[a_2,a a_2+c a_3]
 =-b a_2+a a_3,
\]
so
\begin{equation}\label{eq:L11u}
 u=-a.
\end{equation}
The square $[a_2,a_2]=a_3$ gives
\[
 v a_3=2b a_3,
\]
so $v=2b$.

It remains to use $[a_1,a_1]=0$.  We have
\[
 0=[a a_2+c a_3,a_1]+[a_1,a a_2+c a_3]
   =2c a_3.
\]
Hence $c=0$.  The relation $[a_1,a_2]=a_2$ now gives no new condition.  This proves the necessity of the displayed form, and the converse follows by substituting it in the defining products.  The bracket $[v,u]=u$ follows immediately from the matrices of the basis derivations defined in the theorem.
\end{proof}

\section{Algebras with two-dimensional Leibniz kernel}

The nilpotent one-generated algebra $L_{12}$ is covered by the general one-generated results~\cite{DerCyclic1,DerCyclic2}.  In our basis,
\[
\Der(L_{12})=
\left\{
\begin{pmatrix}
 a&0&0\\
 b&2a&0\\
 c&b&3a
\end{pmatrix}:a,b,c\in\F
\right\},
\qquad \dim\Der(L_{12})=3.
\]
This formula is valid in every characteristic; the coefficients $2$ or $3$ may of course vanish in the corresponding characteristic.  Put $D=\Der(L_{12})$, and let $u,v,w\in D$ correspond, respectively, to the parameters $a,b,c$.  Then $u,v,w$ form a basis of $D$, $\F v\oplus\F w$ is an abelian ideal, and
\[
 D=(\F v\oplus\F w)\rtimes\F u,
 \qquad [u,v]=v,\qquad [u,w]=2w.
\]
In characteristic two, $\F w$ is central.

For $L_{13}$, using the change of basis in Remark~\ref{rem:basis} and the results of~\cite{DerNonNilpotent1}, we obtain
\[
\Der(L_{13})=
\left\{
\begin{pmatrix}
0&0&0\\
a&a&0\\
b&a&0
\end{pmatrix}:a,b\in\F
\right\},
\qquad \dim\Der(L_{13})=2.
\]
Put $D=\Der(L_{13})$, and let $u,v\in D$ correspond, respectively, to the parameters $a,b$.  Then $u,v$ form a basis of $D$, $[u,v]=0$, and hence
\[
 D=\F u\oplus\F v
\]
is abelian.

We next consider $L_{14}(\rho)$.

\begin{theorem}\label{thm:L14}
Let $L=L_{14}(\rho)$, where $\rho\in\Fx$.

If $\rho\ne1$, then
\[
\Der(L)=
\left\{
\begin{pmatrix}
0&0&0\\
a&a&0\\
0&0&b
\end{pmatrix}:a,b\in\F
\right\},
\qquad \dim\Der(L)=2.
\]

If $\rho=1$, then
\[
\Der(L)=
\left\{
\begin{pmatrix}
0&0&0\\
a&a&c\\
b&b&d
\end{pmatrix}:a,b,c,d\in\F
\right\},
\qquad \dim\Der(L)=4.
\]
The second formula agrees, after the change of basis in Remark~\ref{rem:basis}, with the corresponding result of~\cite{DerNonNilpotent1}.

If $\rho\ne1$, put $D=\Der(L)$ and let $u,v\in D$ correspond, respectively, to the parameters $a,b$ in the first displayed matrix.  Then $u,v$ form a basis of $D$, $[u,v]=0$, and
\[
 D=\F u\oplus\F v
\]
is abelian.

If $\rho=1$, for
\[
 B=\begin{pmatrix}a&c\\ b&d\end{pmatrix}\in M_2(\F)
\]
let $d_B\in D$ be the derivation whose matrix is
\[
 \Xi(d_B)=\begin{pmatrix}0&0&0\\ a&a&c\\ b&b&d\end{pmatrix}.
\]
Then the map $B\mapsto d_B$ is a Lie-algebra isomorphism
\[
 \mathfrak{gl}_2(\F)\cong D.
\]
\end{theorem}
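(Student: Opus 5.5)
The plan is a direct coefficient calculation in the style of the proofs of Theorems~\ref{thm:L10} and~\ref{thm:L11}, followed by a short structural argument for $\rho=1$. First I determine $\Leib(L)$. For $x=x_1a_1+x_2a_2+x_3a_3$ we have $[x,x]=(x_1^2+x_1x_2)a_2+\rho x_1x_3a_3$. Hence $\Leib(L)=[L,L]=\F a_2\oplus\F a_3$, since $\rho\ne0$. By Lemma~\ref{lem:invariant} every derivation $d$ has the form
\[
 d(a_1)=x a_1+a a_2+b a_3,\qquad d(a_2)=p a_2+q a_3,\qquad d(a_3)=c a_2+s a_3 .
\]
Since $a_2,a_3$ lie in $\Leib(L)$, they annihilate $L$ from the left. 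So every product with $a_2$ or $a_3$ in the first position is zero. For $j=2,3$ the derivation identity on $[a_j,y]=0$ is then automatic, because both $d(a_j)$ and $a_j$ lie in $\Leib(L)$. Only the three relations with $a_1$ on the left need to be used.

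I apply $d$ to these relations in turn. From $[a_1,a_3]=\rho a_3$ I get $\rho(ca_2+sa_3)=x\rho a_3+ca_2+\rho s a_3$. This gives $x\rho=0$, hence $x=0$, and $(\rho-1)c=0$. From $[a_1,a_2]=a_2$ I get $pa_2+qa_3=xa_2+pa_2+\rho q a_3$, which gives $(\rho-1)q=0$. From $[a_1,a_1]=a_2$ I get $d(a_2)=2xa_2+aa_2+\rho b a_3$, so $p=a$ and $q=\rho b$.

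The case split follows from these equations. If $\rho\ne1$, then $c=0$ and $q=0$, so $b=0$. This leaves exactly the first matrix, with free parameters $a$ and $s$ (renamed $b$). If $\rho=1$, then $c$ is free, $q=b$, and $s$ is free (renamed $d$), which gives the second matrix. For the converse I substitute back into the three relations above; the zero products were already handled in the first step. For $\rho=1$ I will also check agreement with \cite{DerNonNilpotent1} through the basis change $a_1=e_1$, $a_2=e_3$, $a_3=e_2$ of Remark~\ref{rem:basis}.

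For the structure when $\rho\ne1$, the two basis matrices are $\mathrm{diag}$-type on complementary coordinates ($u$ involves only $a_2$, $v$ only $a_3$), and I expect them to commute by a one-line multiplication. For $\rho=1$, the key observation is that $d_B$ restricted to $V=\F a_2\oplus\F a_3$ is the matrix $B$, and that $d_B(a_1)=d_B(a_2)$. Therefore $d_Bd_{B'}(a_1)=d_B(d_{B'}(a_2))=d_Bd_{B'}(a_2)$ and $d_Bd_{B'}|_V=BB'$. Consequently $[d_B,d_{B'}]=d_{[B,B']}$. The map $B\mapsto d_B$ is linear and bijective onto $D$, so it is a Lie isomorphism. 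No step should be a real obstacle. The only point needing care is the sign and coefficient bookkeeping in the $[a_1,a_1]$ relation, namely the factor $2x$ and the $\rho$ in front of $b$, since that is what forces $b=0$ when $\rho\ne1$.
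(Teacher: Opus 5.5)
Your proposal is correct and follows essentially the same route as the paper. It uses the same $\Leib(L)$-invariant ansatz and the same coefficient equations from the three products with left factor $a_1$, namely $x=0$, $p=a$, $q=\rho b$ and $(\rho-1)q=(\rho-1)c=0$, and for $\rho=1$ the same identity $\Xi(d_B)\Xi(d_{B'})=\Xi(d_{BB'})$. The only differences are minor: you obtain $x=0$ from $[a_1,a_3]=\rho a_3$ rather than from the $a_2$-coefficient of $[a_1,a_2]=a_2$, and you justify that identity through $d_B(a_1)=d_B(a_2)$ and $d_B|_{\F a_2\oplus\F a_3}=B$ instead of simply asserting it.
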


\begin{proof}
We prove the general calculation, which also shows why $\rho=1$ is exceptional.  Since $\Leib(L)=\F a_2\oplus\F a_3$, write
\[
\begin{aligned}
 d(a_1)&=s a_1+a a_2+b a_3,\\
 d(a_2)&=p a_2+q a_3,\\
 d(a_3)&=u a_2+v a_3.
\end{aligned}
\]
From $[a_1,a_1]=a_2$ we obtain
\begin{equation}\label{eq:L14square}
 p=2s+a,
 \qquad
 q=\rho b.
\end{equation}
Now use $[a_1,a_2]=a_2$.  We get
\[
 p a_2+q a_3
 =(s+p)a_2+\rho q a_3.
\]
Hence
\begin{equation}\label{eq:L14s}
 s=0,
 \qquad
 (\rho-1)q=0.
\end{equation}
Since $\rho\ne0$, equations~\eqref{eq:L14square} and~\eqref{eq:L14s} give
\[
 p=a,
 \qquad
 (\rho-1)b=0.
\]
Finally, apply $d$ to $[a_1,a_3]=\rho a_3$:
\[
 \rho(u a_2+v a_3)=u a_2+\rho v a_3,
\]
so
\begin{equation}\label{eq:L14u}
 (\rho-1)u=0.
\end{equation}

If $\rho\ne1$, then $b=u=0$, and we obtain the first matrix.  If $\rho=1$, both $b$ and $u$ remain free, and the second matrix follows.  Conversely, substitution in the three defining products proves that every displayed matrix is a derivation.

For $\rho\ne1$, direct multiplication shows that the two basis derivations commute.  For $\rho=1$, the displayed matrices satisfy $\Xi(d_B)\Xi(d_C)=\Xi(d_{BC})$.  Hence $[d_B,d_C]=d_{BC-CB}$, which proves the stated isomorphism with $\mathfrak{gl}_2(\F)$.
\end{proof}

For $L_{15}$ the result in~\cite{DerNonNilpotent1}, together with the last change of basis in Remark~\ref{rem:basis}, gives the following two formulas.  If $\operatorname{char}\F\ne2$, then
\[
\Der(L_{15})=
\left\{
\begin{pmatrix}
0&0&0\\
a&a&0\\
b&a+b&a
\end{pmatrix}:a,b\in\F
\right\},
\qquad \dim\Der(L_{15})=2.
\]
If $\operatorname{char}\F=2$, then
\[
\Der(L_{15})=
\left\{
\begin{pmatrix}
s&0&0\\
a&a&s\\
b&a+b&s+a
\end{pmatrix}:s,a,b\in\F
\right\},
\qquad \dim\Der(L_{15})=3.
\]
Thus characteristic two produces one additional free parameter.  If $\operatorname{char}\F\ne2$, put $D=\Der(L_{15})$ and let $u,v\in D$ correspond, respectively, to $a,b$.  Then $u,v$ form a basis of $D$, $[u,v]=0$, and
\[
 D=\F u\oplus\F v
\]
is abelian.

If $\operatorname{char}\F=2$, let $u,v,w\in D$ correspond, respectively, to the parameters $s,a,b$ in the second displayed matrix.  Then $u,v,w$ form a basis of $D$,
\[
 [u,v]=v,
\]
and $v+w$ is central.  Therefore
\[
 D=(\F v\rtimes\F u)\oplus\F(v+w).
\]

We finish with the irreducible family $L_{16}(\beta,\gamma)$.

\begin{theorem}\label{thm:L16}
Let $L=L_{16}(\beta,\gamma)$, where $X^2-\gamma X-\beta$ is irreducible over $\F$.  In particular, $\beta\ne0$.

If $\operatorname{char}\F\ne2$, or if $\gamma\ne0$, then
\[
\Der(L)=
\left\{
\begin{pmatrix}
0&0&0\\
a&\beta b&\beta(a+\gamma b)\\
b&a+\gamma b&\gamma a+(\beta+\gamma^2)b
\end{pmatrix}
:a,b\in\F
\right\}.
\]
In this case $\dim_{\F}\Der(L)=2$.

If $\operatorname{char}\F=2$ and $\gamma=0$, then
\[
\Der(L)=
\left\{
\begin{pmatrix}
s&0&0\\
a&\beta b&\beta a\\
b&a&s+\beta b
\end{pmatrix}
:s,a,b\in\F
\right\}.
\]
In this case $\dim_{\F}\Der(L)=3$.

In the first case put $D=\Der(L)$ and let $u,v\in D$ correspond, respectively, to the parameters $a,b$ in the first displayed matrix.  Then $u,v$ form a basis of $D$, $[u,v]=0$, and
\[
 D=\F u\oplus\F v
\]
is abelian.

In the exceptional case $\operatorname{char}\F=2$, $\gamma=0$, let $u,v,w\in D$ correspond, respectively, to the parameters $s,a,b$ in the second displayed matrix.  Then $u,v,w$ form a basis of $D$,
\[
 [u,v]=v,\qquad w\in\zeta(D),
\]
and hence
\[
 D=(\F v\rtimes\F u)\oplus\F w.
\]
\end{theorem}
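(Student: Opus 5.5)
Since $\Leib(L)=[L,L]=\F a_2\oplus\F a_3$ (note $[a_1+a_2,a_1+a_2]=a_2+a_3$), Lemma~\ref{lem:invariant} lets me write $d(a_1)=s a_1+a a_2+b a_3$ with $d(a_2),d(a_3)\in K:=\F a_2\oplus\F a_3$. Every product with left factor in $K$ is zero, so $K$ lies in the left center. The whole multiplication is then encoded by the operator $A$ of left multiplication by $a_1$ on $K$. Its matrix in the basis $a_2,a_3$ is
\[
A=\begin{pmatrix}0&\beta\\ 1&\gamma\end{pmatrix}.
\]
Its characteristic polynomial is $X^2-\gamma X-\beta$, and irreducibility gives $\beta\ne0$, so $A$ is invertible.

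First I apply $d$ to $[a_1,a_1]=a_2$. Because $[a_2,a_1]=[a_3,a_1]=0$, this gives $d(a_2)=s a_2+A(a a_2+b a_3)+s a_2$, that is,
\[
d(a_2)=(2s+\beta b)a_2+(a+\gamma b)a_3 .
\]
Next I apply $d$ to $[a_1,a_2]=a_3$, which gives $d(a_3)=s a_3+A\,d(a_2)$ explicitly. This yields the third column
\[
d(a_3)=\beta(a+\gamma b)a_2+\bigl(3s+\beta b+\gamma a+\gamma^2 b\bigr)a_3 .
\]
At this point all entries of the matrix are determined by $s,a,b$.

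The one remaining relation is $[a_1,a_3]=\beta a_2+\gamma a_3$. Combined with the previous step, it says that the restriction $d_K$ of $d$ to $K$ satisfies $d_K A-A d_K=sA$ on all of $K$. This is the key step. Since $A$ is invertible, the relation gives $A^{-1}d_KA=d_K+sI$, and taking traces yields $2s=0$. Taking the trace of $[d_K,A]=sA$ directly gives $\gamma s=0$. Hence $s=0$ whenever $\operatorname{char}\F\ne2$ or $\gamma\ne0$. In the remaining case $\operatorname{char}\F=2$, $\gamma=0$, no condition on $s$ arises from traces.

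I then check the converse, which is the main place where an error could hide. When $s=0$, the matrix of $d_K$ equals $\beta b I+(a+\gamma b)A$ (using $A^2=\gamma A+\beta I$). This commutes with $A$, so the relation $[d_K,A]=0$ holds automatically. This reproduces the first family, with $2s+\beta b=\beta b$ and the stated entries.

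In characteristic two with $\gamma=0$, the formulas reduce to $d(a_2)=\beta b a_2+a a_3$ and $d(a_3)=\beta a a_2+(s+\beta b)a_3$. Here $d_K=\beta bI+aA+s\,\mathrm{diag}(0,1)$, and I compute directly $[\mathrm{diag}(0,1),A]=A$ in characteristic two. This gives the second family with $s$ free.

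The structural assertions follow by multiplying the basis matrices. In the first case $u,v$ restrict on $K$ to polynomials in $A$, so they commute. In the exceptional case I verify directly that $[u,v]=v$ and that $w$ commutes with $u$ and $v$.
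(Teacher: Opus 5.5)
Your argument is correct. Its first half matches the paper's: the same reduction via Lemma~\ref{lem:invariant}, and the same formulas for $d(a_2)$ and $d(a_3)$ obtained from $[a_1,a_1]=a_2$ and $[a_1,a_2]=a_3$.

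The two routes diverge at the relation $[a_1,a_3]=\beta a_2+\gamma a_3$. The paper compares the $a_2$- and $a_3$-coefficients of both sides and gets $2\beta s=0$ and $\gamma s=0$. It then proves sufficiency by substituting each family back into the three defining products. You instead package the last two relations as the single identity $[d_K,A]=sA$ on $K$. You read off $2s=0$ and $\gamma s=0$ as trace identities, using conjugation by the invertible $A$ and the vanishing trace of a commutator. Sufficiency then comes from $d_K=\beta bI+(a+\gamma b)A\in\F[A]$ when $s=0$, and from $[\mathrm{diag}(0,1),A]=A$ when $\operatorname{char}\F=2$, $\gamma=0$.

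The paper's computation is fully elementary and produces the matrix entries directly. Your version explains why the exception is exactly $\operatorname{char}\F=2$, $\gamma=0$: the numbers $2=\dim K$ and $\gamma=\operatorname{tr}A$ are what appear. It also explains why $D$ is abelian in the generic case. Moreover, it carries over to one-generated algebras of higher dimension with invertible $A$, where the same trace argument gives $(\dim K)\,s=0$.

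One small point in the structural part: $u$ and $v$ commuting as operators on $K$ does not by itself give $[u,v]=0$. You also need the $a_1$-column, i.e.\ $u(a_3)=v(a_2)$. This holds, since $u|_K=A$ and $v|_K=\beta I+\gamma A$ give $u(a_3)=\beta a_2+\gamma a_3=v(a_2)$. More conceptually, for $s=0$ one has $d(a_1)=(aI+bA)a_2$ and $d_K=(aI+bA)A$. So every such derivation is governed by an element of $\F[A]$, and commutators vanish on $a_1$ as well as on $K$.
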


\begin{proof}
Irreducibility gives $\beta\ne0$, because otherwise $X^2-\gamma X=X(X-\gamma)$.

Let $d\in\Der(L)$.  Since
\[
 \Leib(L)=[L,L]=\F a_2\oplus\F a_3,
\]
we may write
\[
\begin{aligned}
 d(a_1)&=s a_1+a a_2+b a_3,\\
 d(a_2)&=p a_2+q a_3,\\
 d(a_3)&=u a_2+v a_3.
\end{aligned}
\]
We now use the three defining products.

First, $[a_1,a_1]=a_2$ gives
\begin{equation}\label{eq:L16pq}
 p=2s+\beta b,
 \qquad
 q=a+\gamma b.
\end{equation}
Indeed,
\[
\begin{aligned}
[d(a_1),a_1]+[a_1,d(a_1)]
&=s a_2+s a_2+a a_3+b(\beta a_2+\gamma a_3)\\
&=(2s+\beta b)a_2+(a+\gamma b)a_3.
\end{aligned}
\]

Next, $[a_1,a_2]=a_3$ gives
\begin{equation}\label{eq:L16uv}
 u=\beta q,
 \qquad
 v=s+p+\gamma q.
\end{equation}
Using~\eqref{eq:L16pq}, this becomes
\[
 u=\beta(a+\gamma b),
\]
and
\begin{equation}\label{eq:L16v}
 v=3s+\gamma a+(\beta+\gamma^2)b.
\end{equation}

Finally, apply $d$ to
\[
 [a_1,a_3]=\beta a_2+\gamma a_3.
\]
The coefficient of $a_2$ gives
\[
 2\beta s=0,
\]
and the coefficient of $a_3$ gives
\[
 \gamma s=0.
\]
Since $\beta\ne0$, we have
\begin{equation}\label{eq:L16exception}
 2s=0,
 \qquad
 \gamma s=0.
\end{equation}

If $\operatorname{char}\F\ne2$, then $s=0$.  If $\operatorname{char}\F=2$ but $\gamma\ne0$, again $s=0$.  Substituting $s=0$ in~\eqref{eq:L16pq}--\eqref{eq:L16v} gives the first matrix.

The only remaining possibility is
\[
 \operatorname{char}\F=2,
 \qquad
 \gamma=0.
\]
Then~\eqref{eq:L16exception} imposes no condition on $s$, and
\[
 p=\beta b,
 \qquad
 q=a,
 \qquad
 u=\beta a,
 \qquad
 v=s+\beta b.
\]
This is exactly the second matrix.

It remains to verify the converse.  For the first family, set
\[
\begin{aligned}
 d(a_1)&=a a_2+b a_3,\\
 d(a_2)&=\beta b a_2+(a+\gamma b)a_3,\\
 d(a_3)&=\beta(a+\gamma b)a_2+
 \bigl(\gamma a+(\beta+\gamma^2)b\bigr)a_3.
\end{aligned}
\]
Since $[\F a_2+\F a_3,L]=0$, we obtain
\[
\begin{aligned}
[d(a_1),a_1]+[a_1,d(a_1)]
 &=a[a_1,a_2]+b[a_1,a_3]\\
 &=\beta b a_2+(a+\gamma b)a_3=d(a_2),
\end{aligned}
\]
and
\[
\begin{aligned}
[d(a_1),a_2]+[a_1,d(a_2)]
 &=\beta b[a_1,a_2]+(a+\gamma b)[a_1,a_3]\\
 &=\beta(a+\gamma b)a_2+
 \bigl(\gamma a+(\beta+\gamma^2)b\bigr)a_3=d(a_3).
\end{aligned}
\]
For the third defining product,
\[
\begin{aligned}
[d(a_1),a_3]+[a_1,d(a_3)]
 &=\beta\bigl(\gamma a+(\beta+\gamma^2)b\bigr)a_2\\
 &\quad+\Bigl(\beta(a+\gamma b)+
 \gamma\bigl(\gamma a+(\beta+\gamma^2)b\bigr)\Bigr)a_3,
\end{aligned}
\]
while
\[
\begin{aligned}
 d(\beta a_2+\gamma a_3)
 &=\beta d(a_2)+\gamma d(a_3)\\
 &=\beta\bigl(\gamma a+(\beta+\gamma^2)b\bigr)a_2\\
 &\quad+\Bigl(\beta(a+\gamma b)+
 \gamma\bigl(\gamma a+(\beta+\gamma^2)b\bigr)\Bigr)a_3.
\end{aligned}
\]
Thus the three nonzero products are preserved.  All products with left factor $a_2$ or $a_3$ are zero, and the derivation identity for them is also zero on both sides.

For the second family, $\operatorname{char}\F=2$ and $\gamma=0$.  We have
\[
\begin{aligned}
 d(a_1)&=s a_1+a a_2+b a_3,\\
 d(a_2)&=\beta b a_2+a a_3,\\
 d(a_3)&=\beta a a_2+(s+\beta b)a_3.
\end{aligned}
\]
Now
\[
\begin{aligned}
[d(a_1),a_1]+[a_1,d(a_1)]
 &=s a_2+s a_2+a a_3+\beta b a_2\\
 &=\beta b a_2+a a_3=d(a_2),
\end{aligned}
\]
where $2s=0$.  Also,
\[
\begin{aligned}
[d(a_1),a_2]+[a_1,d(a_2)]
 &=s a_3+\beta b a_3+\beta a a_2\\
 &=\beta a a_2+(s+\beta b)a_3=d(a_3).
\end{aligned}
\]
Finally, using $[a_1,a_3]=\beta a_2$,
\[
\begin{aligned}
[d(a_1),a_3]+[a_1,d(a_3)]
 &=s\beta a_2+\beta a a_3+(s+\beta b)\beta a_2\\
 &=\beta^2 b a_2+\beta a a_3
 =\beta d(a_2)
 =d([a_1,a_3]).
\end{aligned}
\]
Again all products with left factor $a_2$ or $a_3$ give zero on both sides.  Thus both displayed families are exact.  Finally, direct multiplication of the matrices of the basis derivations shows that the first algebra is abelian, while in the exceptional case $[u,v]=v$ and $w$ is central.  This proves the structural assertions.
\end{proof}

\begin{remark}\label{rem:L16char2}
The second part of Theorem~\ref{thm:L16} is a genuine characteristic-two phenomenon.  When $\operatorname{char}\F=2$ and $\gamma=0$, the irreducible polynomial is $X^2-\beta$.  Such a polynomial may be irreducible over an imperfect field, and the coefficient $s$ in $d(a_1)$ is then free.  This case is not covered by characteristic-$\ne2$ calculations.
\end{remark}

\begin{remark}\label{rem:L16cyclic}
The algebra $L_{16}(\beta,\gamma)$ is one-generated, since $a_2=[a_1,a_1]$ and $a_3=[a_1,a_2]$.  The general one-generated result of~\cite{DerCyclic1} gives a two-dimensional abelian ideal in its derivation algebra and shows that an additional derivation can occur only when the characteristic divides $2$.  Theorem~\ref{thm:L16} makes this completely explicit in dimension three: the extra parameter occurs exactly when $\operatorname{char}\F=2$ and $\gamma=0$.
\end{remark}

\section{Summary}

For convenience, Table~\ref{tab:summary} collects all sixteen derivation algebras in the fixed basis.  The structural notation $u,v,w,z,t$ is the notation introduced with the corresponding result above; in particular, these are explicitly defined basis derivations, not additional abstract symbols.  The algebra parameters and the field restrictions are those in Table~\ref{tab:status}.  For each fixed algebra, the remaining matrix parameters are arbitrary elements of $\F$; any further case distinctions are indicated in the first column.

\begingroup
\scriptsize
\setlength{\tabcolsep}{2pt}
\renewcommand{\arraystretch}{1.38}
\begin{longtable}{@{}P{0.135\textwidth}P{0.645\textwidth}P{0.055\textwidth}P{0.13\textwidth}@{}}
\caption{Derivation algebras of the sixteen three-dimensional types.}\label{tab:summary}\\
\toprule
Type & Matrix form and internal structure of $D=\Der(L)$ & $\dim$ & Source\\
\midrule
\endfirsthead
\toprule
Type & Matrix form and internal structure of $D=\Der(L)$ & $\dim$ & Source\\
\midrule
\endhead
\midrule
\multicolumn{4}{r}{\emph{Continued on next page}}\\
\endfoot
\bottomrule
\endlastfoot
$L_1$ & $\left\{\begin{pmatrix}a&0&0\\b&d&0\\c&e&2a\end{pmatrix}\right\}$\par\smallskip $D=(\F v\oplus\F w\oplus\F t)\rtimes(\F u\oplus\F z)$; $\F u\oplus\F z$ is abelian and $[v,t]=-w$. & $5$ & \cite{DerLowDimsUMJ}\\
$L_2$ & $\left\{\begin{pmatrix}a&0&0\\b&a+b&0\\c&d&2a+b\end{pmatrix}\right\}$\par\smallskip $D=(\F w\oplus\F z)\rtimes(\F u\oplus\F v)$; both displayed summands are abelian. & $4$ & \cite{DerNilpotent}\\
$L_3(\alpha)$ & $\left\{\begin{pmatrix}a&0&0\\b&a+(1+\alpha)b&0\\c&d&2a+(1+\alpha)b\end{pmatrix}\right\}$\par\smallskip $D=(\F w\oplus\F z)\rtimes(\F u\oplus\F v)$; both displayed summands are abelian. & $4$ & Thm.~\ref{thm:L3}\\
$L_4$, $\operatorname{char}\F\ne2$ & $\left\{\begin{pmatrix}a&0&0\\b&a&0\\c&d&2a\end{pmatrix}\right\}$\par\smallskip $D=(\F w\oplus\F z)\rtimes(\F u\oplus\F v)$; both displayed summands are abelian. & $4$ & Thm.~\ref{thm:L4}\\
$L_4$, $\operatorname{char}\F=2$ & $\left\{\begin{pmatrix}a&p&0\\b&a+p&0\\c&d&0\end{pmatrix}\right\}$\par\smallskip $D=(\F z\oplus\F t)\rtimes H$, $H=\F u\oplus\F v\oplus\F w$, and $H=(\F(u+w)\rtimes\F v)\oplus\F u$. & $5$ & Thm.~\ref{thm:L4}\\
$L_5(\beta)$, $\operatorname{char}\F\ne2$ & $\left\{\begin{pmatrix}a&-\beta b&0\\b&a&0\\c&d&2a\end{pmatrix}\right\}$\par\smallskip $D=(\F w\oplus\F z)\rtimes(\F u\oplus\F v)$; both displayed summands are abelian. & $4$ & Thm.~\ref{thm:L5}\\
$L_5(\beta)$, $\operatorname{char}\F=2$ & $\left\{\begin{pmatrix}a&\beta b&0\\b&e&0\\c&d&0\end{pmatrix}\right\}$\par\smallskip $D=(\F w\oplus\F z)\rtimes H$, $H=\F u\oplus\F v\oplus\F t$, and $H=(\F v\rtimes\F u)\oplus\F(u+t)$. & $5$ & Thm.~\ref{thm:L5}\\
$L_6(\delta)$ & $\left\{\begin{pmatrix}a&-\delta b&0\\b&a+b&0\\c&d&2a+b\end{pmatrix}\right\}$\par\smallskip $D=(\F w\oplus\F z)\rtimes(\F u\oplus\F v)$; both displayed summands are abelian. & $4$ & Thm.~\ref{thm:L6}\\
$L_7$ & $\left\{\begin{pmatrix}0&0&0\\a&b&0\\c&0&0\end{pmatrix}\right\}$\par\smallskip $D=(\F u\rtimes\F v)\oplus\F w$, $[v,u]=u$, and $\F w\le\zeta(D)$. & $3$ & \cite{DerNonNilpotent3}\\
$L_8(\lambda)$, $\operatorname{char}\F=2$ & $\left\{\begin{pmatrix}0&0&0\\a&b&0\\c&a&0\end{pmatrix}\right\}$\par\smallskip $D=(\F u\rtimes\F v)\oplus\F w$, $[v,u]=u$, and $\F w\le\zeta(D)$. & $3$ & \cite{DerNonNilpotent3}\\
$L_9$ & $\left\{\begin{pmatrix}0&0&0\\a&b&0\\c&0&c\end{pmatrix}\right\}$\par\smallskip $D=(\F u\rtimes\F v)\oplus\F w$, $[v,u]=u$, and $\F w\le\zeta(D)$. & $3$ & Thm.~\ref{thm:L9}\\
$L_{10}(r)$ & $\left\{\begin{pmatrix}0&0&0\\a&b&0\\c&0&rc\end{pmatrix}\right\}$\par\smallskip $D=(\F u\rtimes\F v)\oplus\F w$, $[v,u]=u$, and $\F w\le\zeta(D)$. & $3$ & Thm.~\ref{thm:L10}\\
$L_{11}$ & $\left\{\begin{pmatrix}0&0&0\\a&b&0\\0&-a&2b\end{pmatrix}\right\}$\par\smallskip $D=\F u\rtimes\F v$, $[v,u]=u$. & $2$ & Thm.~\ref{thm:L11}\\
$L_{12}$ & $\left\{\begin{pmatrix}a&0&0\\b&2a&0\\c&b&3a\end{pmatrix}\right\}$\par\smallskip $D=(\F v\oplus\F w)\rtimes\F u$, with $[u,v]=v$ and $[u,w]=2w$. & $3$ & \cite{DerCyclic1,DerCyclic2}\\
$L_{13}$ & $\left\{\begin{pmatrix}0&0&0\\a&a&0\\b&a&0\end{pmatrix}\right\}$\par\smallskip $D=\F u\oplus\F v$ is abelian. & $2$ & \cite{DerNonNilpotent1}\\
$L_{14}(\rho)$, $\rho\ne1$ & $\left\{\begin{pmatrix}0&0&0\\a&a&0\\0&0&b\end{pmatrix}\right\}$\par\smallskip $D=\F u\oplus\F v$ is abelian. & $2$ & Thm.~\ref{thm:L14}\\
$L_{14}(1)$ & $\left\{\begin{pmatrix}0&0&0\\a&a&c\\b&b&d\end{pmatrix}\right\}$\par\smallskip $D\cong\mathfrak{gl}_2(\F)$. & $4$ & \cite{DerNonNilpotent1}\\
$L_{15}$, $\operatorname{char}\F\ne2$ & $\left\{\begin{pmatrix}0&0&0\\a&a&0\\b&a+b&a\end{pmatrix}\right\}$\par\smallskip $D=\F u\oplus\F v$ is abelian. & $2$ & \cite{DerNonNilpotent1}\\
$L_{15}$, $\operatorname{char}\F=2$ & $\left\{\begin{pmatrix}s&0&0\\a&a&s\\b&a+b&s+a\end{pmatrix}\right\}$\par\smallskip $D=(\F v\rtimes\F u)\oplus\F(v+w)$, $[u,v]=v$, and $\F(v+w)\le\zeta(D)$. & $3$ & \cite{DerNonNilpotent1}\\
$L_{16}(\beta,\gamma)$, $\operatorname{char}\F\ne2$ or $\gamma\ne0$ & $\left\{\begin{pmatrix}0&0&0\\a&\beta b&\beta(a+\gamma b)\\b&a+\gamma b&\gamma a+(\beta+\gamma^2)b\end{pmatrix}\right\}$\par\smallskip $D=\F u\oplus\F v$ is abelian. & $2$ & Thm.~\ref{thm:L16}\\
$L_{16}(\beta,0)$, $\operatorname{char}\F=2$ & $\left\{\begin{pmatrix}s&0&0\\a&\beta b&\beta a\\b&a&s+\beta b\end{pmatrix}\right\}$\par\smallskip $D=(\F v\rtimes\F u)\oplus\F w$, $[u,v]=v$, and $\F w\le\zeta(D)$. & $3$ & Thm.~\ref{thm:L16}\\
\end{longtable}
\endgroup

The table shows four characteristic-two increases in dimension: for $L_4$, $L_5(\beta)$, $L_{15}$, and the subfamily $L_{16}(\beta,0)$.  These cases are the main reason that a characteristic-$\ne2$ description cannot simply be transferred to arbitrary fields.

\sloppy

\fussy

\end{document}